\newtheorem{theorem}{Theorem}[section]
\newtheorem{lemma}[theorem]{Lemma}
\newtheorem{proposition}[theorem]{Proposition}
\theoremstyle{definition}
\newtheorem{Example}[theorem]{Example}
\theoremstyle{remark}
\numberwithin{equation}{section}
\begin{document}

\title[Riesz bases] {Riesz bases consisting of root functions of
1D Dirac operators}

\author{Plamen Djakov}

\thanks{P. Djakov acknowledges
the hospitality  of Department of Mathematics and the support of
Mathematical Research Institute of The Ohio State University, July -
August 2011.}

\address{Sabanci University,  Orhanli,
34956 Tuzla, Istanbul, Turkey}

\email{djakov@sabanciuniv.edu}

\author{Boris Mityagin}

\address{Department of Mathematics,
The Ohio State University,
 231 West 18th Ave,
Columbus, OH 43210, USA}

\email{mityagin.1@osu.edu}

\thanks{B. Mityagin acknowledges the support of the Scientific and
Technological Research Council of Turkey and the hospitality of
Sabanci University, April - June, 2011.}

\begin{abstract}
For one-dimensional Dirac operators
$$ Ly= i \begin{pmatrix}  1  &  0 \\ 0  & -1 \end{pmatrix}
\frac{dy}{dx} + v y, \quad v= \begin{pmatrix}  0  &  P \\ Q  & 0
\end{pmatrix}, \;\; y=\begin{pmatrix}  y_1 \\ y_2 \end{pmatrix},
$$
subject to periodic or antiperiodic boundary conditions, we give
necessary and sufficient conditions which guarantee that the system
of root functions contains  Riesz bases in $L^2 ([0,\pi],
\mathbb{C}^2).$

In particular, if the potential matrix $v$ is skew-symmetric (i.e.,
$\overline{Q} =-P$), or more generally if $\overline{Q} =t P$  for
some real $t \neq 0,$ then there  exists a Riesz basis that consists
of root functions of the operator $L.$ \vspace{2mm}\\
2010 Mathematics Subject Classification. 47E05, 34L40.

\end{abstract}

\maketitle

\section{Introduction}
We consider one-dimensional Dirac operators of the form
\begin{equation}
\label{001} L_{bc}(v) y= i \begin{pmatrix}  1  &  0 \\ 0  & -1
\end{pmatrix} \frac{dy}{dx} + v(x) \, y, \quad v= \begin{pmatrix}  0
&  P \\ Q  & 0
\end{pmatrix}, \;\; y=\begin{pmatrix}  y_1 \\ y_2 \end{pmatrix},
\end{equation}
with periodic matrix potentials $v$ such that $P,Q \in L^2 ([0,\pi],
\mathbb{C}^2),$ subject to periodic ($Per^+$) or antiperiodic
($Per^-$) boundary conditions ($bc$):
\begin{equation}
\label{002} Per^+: \;\;  y(\pi) = y(0); \qquad Per^-: \;\;  y(\pi) =
-y(0).
\end{equation}
Our goal is to give necessary and sufficient conditions on
potentials $v$  which guarantee that the system of periodic (or
antiperiodic) root functions of $L_{Per^\pm}(v)$ contains Riesz
bases.

The free operators $L_{Per^\pm}^0 = L_{Per^\pm}(0)$ have discrete
spectrum:
$$
Sp (L^0_{Per^\pm}) = \Gamma^\pm, \quad \text{where} \quad \Gamma^\pm
=
\begin{cases}
2\mathbb{Z} & \text{if} \; \;  bc=Per^+\\
2\mathbb{Z} +1& \text{if} \; \;  bc=Per^-\\
\end{cases}
$$
and each eigenvalue is of multiplicity 2. The spectra of perturbed
operators  $L_{Per^\pm}(v) = L^0_{Per^\pm} + v$ is also discrete;
for $n \in \Gamma^\pm $ with large enough $|n|$ the perturbed
operator has "twin" eigenvalues $\lambda_n^\pm $ close to $n.$ In
the case  where $\lambda^-_n \neq \lambda^+_n$ for large enough
$|n|,$ could the corresponding normalized "twin eigenfunctions"
form a Riesz basis?

Recently, in the case of Hill operators, many authors focused on
this problem  (see \cite{DV05,DM15,DM25a,DM25,GT11,Ma06-1,Ma06-2,
Ma06-3,ShVe09,Ve10} and the bibliography there). It may happen that
$\lambda_n^- \neq \lambda_n^+$ for $ |n|>N_*$ but the system of
 normalized eigenfunctions
fails to give a convergent eigenfunction expansion (see
\cite[Theorem 71]{DM15}).

In the present paper we consider such a problem in the case of 1D
periodic Dirac operators. In \cite{DM26},  we have singled out a
class of potentials $v$ which smoothness could be determined only by
the rate of decay of related spectral gaps $\gamma_n = \lambda_n^+ -
\lambda_n^-, $ where $ \lambda_n^\pm$ are the eigenvalues of
$L=L(v)$ considered on $[0,\pi]$ with periodic (for even $n$) or
antiperiodic (for odd $n$) boundary conditions. This class $X$ is
determined by the properties of the functionals $\beta_n^- (v;z) $
and  $\beta_n^+ (v,z) $ (see below (\ref{beta}) ) to be equivalent
in the following sense: there are $ c, \, N> 0$ such that
$$
c^{-1} |\beta_n^+ (v;z_n^*)| \leq |\beta_n^- (v;z_n^*)| \leq c
|\beta_n^+ (v;z_n^*)|, \quad  |n|>N,
 \quad
z_n^* = (\lambda_n^+ + \lambda_n^-)/2 -n.
$$

Section 3 contains the main results of this paper. We prove that if
$v \in X $ then the system of root functions of the operator
$L_{Per^\pm} (v) $ contains Riesz bases in $L^2 ([0,\pi],
\mathbb{C}^2). $ Theorem~\ref{thm0}, which is analogous to Theorem 1
in \cite{DM25} (or Theorem 2 in \cite{DM25a}), gives  necessary and
sufficient conditions for existence of such Riesz bases.
Theorem~\ref{thm00} is a modification of Theorem~\ref{thm0}  that is
more suitable for application to concrete classes of potentials.

Applications of Theorems \ref{thm0}  and \ref{thm00} are given in
Section 4. In particular, we prove that if the potential matrix $v$
is skew-symmetric (i.e., $\overline{Q} =-P$)   then the system of
root functions of $L_{Per^\pm} (v)$ contains Riesz bases in $L^2
([0,\pi], \mathbb{C}^2). $

\section{Preliminaries}

1. Let $H$ be a separable Hilbert space, and let $(e_\alpha, \,
\alpha \in \mathcal{I})$ be an orthonormal basis in $H.$ If $A: H\to
H$ is an automorphism, then the system
\begin{equation}
\label{p2} f_\alpha = A e_\alpha, \quad  \alpha \in \mathcal{I},
\end{equation}
is an unconditional basis in $H.$ Indeed, for each $x\in H$ we have
$$ x= A(A^{-1} x )= A \left (\sum_\alpha \langle A^{-1} x,e_\alpha
\rangle e_\alpha \right)= \sum_\alpha \langle x,(A^{-1})^*e_\alpha
\rangle f_\alpha =\sum_\alpha \langle x,\tilde{f}_\alpha \rangle
f_\alpha,$$ i.e., $(f_\alpha)$ is a basis, its biorthogonal system
is $\{\tilde{f}_\alpha =(A^{-1})^* e_\alpha, \;\alpha \in
\mathcal{I}\},$ and the series converge unconditionally.
 Moreover, it follows that
\begin{equation} \label{p3} 0< c \leq
\|f_\alpha\| \leq  C, \quad m^2\|x\|^2 \leq \sum_\alpha |\langle
x,\tilde{f}_\alpha \rangle|^2 \|f_\alpha\|^2 \leq M^2 \|x\|^2,
\end{equation}
with $ c= 1/\|A^{-1}\|, \; C=\|A\|,  \; M= \|A\|\cdot \|A^{-1}\|$
and $m=1/M.$

A basis of the form (\ref{p2}) is called {\em Riesz basis.}  One can
easily see that the property (\ref{p3}) characterizes Riesz bases,
i.e., a basis $(f_\alpha)$ is a Riesz bases if and only if
(\ref{p3}) holds with some constants $C\geq c>0$ and $M \geq m >0.$
Another characterization of Riesz bases  is given by the following
assertion (see \cite[Chapter 6, Section 5.3, Theorem 5.2]{GK}):
 {\em If
$(f_\alpha)$ is a normalized  basis (i.e., $\|f_\alpha\|=1 \;
\forall  \alpha $), then it is a Riesz basis if and only if it is
unconditional.}
\bigskip

A countable family of bounded projections $\{P_\alpha: H\to H , \,
\alpha \in \mathcal{I}\}$ is called {\em  unconditional basis of
projections} if $P_\alpha P_\beta =0 $ for $ \alpha \neq \beta $ and
$$ x= \sum_{\alpha \in \mathcal{I}} P_\alpha (x) \quad \forall x \in
H,
$$
where the series converge unconditionally in $H.$

If $\{H_\alpha, \, \alpha \in \mathcal{I}\} $ is a maximal family of
mutually orthogonal subspaces of $H$ and $Q_\alpha $ is the
orthogonal projection on $H_\alpha, \,  \alpha \in \mathcal{I},$
then   $\{Q_\alpha, \,  \alpha \in \mathcal{I}\}$ is an
unconditional basis of projections. A family of projections
$\{P_\alpha, \,  \alpha \in \mathcal{I}\}$ is
 called a {\em Riesz basis of projections} if
 there is a family of orthogonal projections
 $\{Q_\alpha, \,  \alpha \in \mathcal{I}\}$
 and an isomorphism $A:H \to H $ such that
\begin{equation}
\label{p5} P_\alpha = A \, Q_\alpha \, A^{-1}, \quad \alpha \in
\mathcal{I}.
\end{equation}
In view of (\ref{p5}),  if $\{P_\alpha \}$ is a Riesz basis of
projections, then there are constants $a, b>0 $ such that
\begin{equation}
\label{p6}       a \|x\|^2  \leq \sum_\alpha \|P_\alpha x\|^2 \leq b
\|x\|^2 \quad  \forall x \in H.
\end{equation}

For a family of projections
 $\mathcal{P}=\{P_\alpha, \,  \alpha \in \mathcal{I}\}$
 the following properties are equivalent (see \cite[Chapter 6]{GK}):

 (i)  $\mathcal{P}$  is an unconditional basis of projections;

 (ii)  $\mathcal{P}$  is a Riesz basis of projections.

\begin{lemma}
\label{lemr} Let $(P_\alpha, \, \alpha\in \mathcal{I})$  be a Riesz
basis of two-dimensional projections in a Hilbert space $H,$  and
let
  $f_\alpha, \, g_\alpha \in Ran \,
P_\alpha, $  $\alpha \in \mathcal{I}$   be linearly independent unit
vectors. Then the system $\{f_\alpha, \, g_\alpha, \; \alpha \in
\Gamma \} $ is a Riesz basis if and only if
\begin{equation}
\label{p7}  \kappa :=\sup |\langle  f_\alpha,  g_\alpha \rangle | <
1.
\end{equation}
\end{lemma}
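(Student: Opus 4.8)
The plan is to base both implications on a single local computation: the Gram matrix of the unit pair $\{f_\alpha,g_\alpha\}$ inside the two-dimensional space $\mathrm{Ran}\,P_\alpha$. Writing $\mu_\alpha=\langle f_\alpha,g_\alpha\rangle$ and $\kappa_\alpha=|\mu_\alpha|$, the Hermitian matrix $\bigl(\begin{smallmatrix}1&\bar\mu_\alpha\\ \mu_\alpha&1\end{smallmatrix}\bigr)$ has eigenvalues $1\pm\kappa_\alpha$, so for every pair of scalars $(\xi,\eta)$ one gets the two-sided bound
$$(1-\kappa_\alpha)(|\xi|^2+|\eta|^2)\le \|\xi f_\alpha+\eta g_\alpha\|^2\le (1+\kappa_\alpha)(|\xi|^2+|\eta|^2).$$
Linear independence forces $\kappa_\alpha<1$ for each individual $\alpha$; the role of condition (\ref{p7}) is precisely to make the lower constant $1-\kappa_\alpha$ \emph{uniformly} positive.

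For the sufficiency ($\kappa<1\Rightarrow$ Riesz basis), I would construct the required isomorphism explicitly. Since $\{P_\alpha\}$ is a Riesz basis of projections, there are orthogonal projections $Q_\alpha$ onto mutually orthogonal two-dimensional subspaces $K_\alpha$ with $\bigoplus_\alpha K_\alpha=H$ and $P_\alpha=A\,Q_\alpha\,A^{-1}$ as in (\ref{p5}); fix an orthonormal basis $e_\alpha,e_\alpha'$ of each $K_\alpha$, so that $\{e_\alpha,e_\alpha'\}$ is an orthonormal basis of $H$. Define $T\colon H\to H$ by $T\bigl(\sum_\alpha(\xi_\alpha e_\alpha+\eta_\alpha e_\alpha')\bigr)=\sum_\alpha w_\alpha$, where $w_\alpha:=\xi_\alpha f_\alpha+\eta_\alpha g_\alpha\in\mathrm{Ran}\,P_\alpha$. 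Because $P_\beta P_\alpha=0$ for $\beta\ne\alpha$ and $w_\alpha\in\mathrm{Ran}\,P_\alpha$, the vector $y=Tx$ satisfies $P_\alpha y=w_\alpha$, so the projection bound (\ref{p6}) gives $a\|Tx\|^2\le\sum_\alpha\|w_\alpha\|^2\le b\|Tx\|^2$. Combining this with the local Gram bound and the identity $\sum_\alpha(|\xi_\alpha|^2+|\eta_\alpha|^2)=\|x\|^2$ yields
$$\tfrac{1-\kappa}{b}\,\|x\|^2\le \|Tx\|^2\le \tfrac{1+\kappa}{a}\,\|x\|^2,$$
so $T$ is bounded and bounded below. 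Surjectivity I would obtain by expanding an arbitrary $y=\sum_\alpha P_\alpha y$, writing each $P_\alpha y=\xi_\alpha f_\alpha+\eta_\alpha g_\alpha$, and checking $\sum_\alpha(|\xi_\alpha|^2+|\eta_\alpha|^2)<\infty$ from the same lower Gram estimate together with (\ref{p6}). Then $T$ is an automorphism carrying the orthonormal basis $\{e_\alpha,e_\alpha'\}$ onto $\{f_\alpha,g_\alpha\}$, which is exactly the definition (\ref{p2}) of a Riesz basis. I would also have to justify convergence of the series defining $Tx$; here I would pass through $A^{-1}$ to reduce $\sum_\alpha w_\alpha$ to a sum over the mutually orthogonal summands $A^{-1}w_\alpha\in K_\alpha$.

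For the necessity I would argue by contradiction. If $\kappa=1$, choose $\alpha_k$ with $\kappa_{\alpha_k}\to1$ and write $\mu_{\alpha_k}=\kappa_{\alpha_k}e^{i\theta_k}$. The combination $h_k:=f_{\alpha_k}-e^{i\theta_k}g_{\alpha_k}$ has coefficients of squared-modulus sum $2$, whereas the local computation gives $\|h_k\|^2=2(1-\kappa_{\alpha_k})\to0$. Any Riesz basis, being the image of an orthonormal basis under an isomorphism $A$ as in (\ref{p2}), satisfies a uniform lower bound $\delta\sum_j|c_j|^2\le\|\sum_j c_j h_j\|^2$ with $\delta=\|A^{-1}\|^{-2}$; the family $h_k$ violates it, a contradiction, so $\kappa<1$.

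The step I expect to be the main obstacle is the sufficiency direction, and within it the fact that the ranges $\mathrm{Ran}\,P_\alpha$ are in general \emph{not} mutually orthogonal, so one cannot simply add the norms $\|w_\alpha\|^2$. The Riesz-basis-of-projections estimate (\ref{p6}) is exactly the device that compensates for this non-orthogonality, and pairing it with the uniform positivity of $1-\kappa$ is the crux of the whole argument.
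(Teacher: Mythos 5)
Your proof is correct, and it shares the paper's central computation --- the Gram-matrix eigenvalue bound $(1\pm\kappa_\alpha)(|\xi|^2+|\eta|^2)$ for $\|\xi f_\alpha+\eta g_\alpha\|^2$, paired with the projection estimate (\ref{p6}) --- but it concludes differently in both directions. For sufficiency, the paper simply observes that the Gram bound plus (\ref{p6}) yields the two-sided coefficient inequality (\ref{p3}) and then invokes the stated characterization of Riesz bases by (\ref{p3}); you instead build the automorphism $T$ explicitly and verify the definition (\ref{p2}) directly, which is longer but more self-contained: in particular your surjectivity step (expanding $y=\sum_\alpha P_\alpha y$ and checking square-summability of the coordinates) makes explicit the completeness/basis property that the paper's appeal to the (\ref{p3}) characterization leaves implicit, and your convergence argument through $A^{-1}$ is exactly the right device. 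For necessity, the paper argues via operator norms: if the system were a Riesz basis, the one-dimensional coordinate projections $P^1_\alpha(x)=f^*_\alpha(x)f_\alpha$, $P^2_\alpha(x)=g^*_\alpha(x)g_\alpha$ would be uniformly bounded by (\ref{p3}), whereas $\|P^1_\alpha\|^2\geq\bigl(1-|\langle f_\alpha,g_\alpha\rangle|^2\bigr)^{-1}$ blows up when $\kappa=1$; you instead exhibit test vectors $h_k=f_{\alpha_k}-e^{i\theta_k}g_{\alpha_k}$ with coefficient mass $2$ but $\|h_k\|^2=2(1-\kappa_{\alpha_k})\to0$, violating the lower Riesz bound $\|A^{-1}\|^{-2}\sum_j|c_j|^2\leq\|\sum_jc_jh_j\|^2$. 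These are two manifestations of the same phenomenon (the norm of the oblique projection onto $f$ along $g$ is exactly $(1-|\langle f,g\rangle|^2)^{-1/2}$), but your test-vector version is the more elementary of the two, requiring nothing about biorthogonal functionals. Your closing remark correctly identifies the crux: the ranges $\mathrm{Ran}\,P_\alpha$ are not mutually orthogonal, and (\ref{p6}) is precisely what substitutes for orthogonality when summing the local estimates --- this is the same role it plays in the paper's proof.
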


\begin{proof}
Suppose that the system $\{f_\alpha, \, g_\alpha, \; \alpha \in
\mathcal{I} \} $ is a Riesz basis in $H.$ Then
$$
x= \sum_\alpha (f^*_\alpha (x) f_\alpha + g^*_\alpha (x) g_\alpha),
\quad x \in H,
$$
where $f^*_\alpha, g^*_\alpha$ are the conjugate functionals. By
(\ref{p3}), the one-dimensional projections
$$
P^1_\alpha (x) =f^*_\alpha (x) f_\alpha,  \quad P^2_\alpha
(x)=g^*_\alpha (x) g_\alpha, \quad \alpha \in \mathcal{I},
$$
are uniformly bounded. On the other hand, it is easy to see that
$$
\|P^1_\alpha\|^2 \geq \left ( 1- |\langle  f_\alpha,  g_\alpha
\rangle |^2 \right )^{-1}, \quad \|P^2_\alpha\|^2 \geq \left (1-
|\langle  f_\alpha,  g_\alpha \rangle |^2 \right )^{-1},$$ so
(\ref{p7}) holds.

Conversely, suppose (\ref{p7}) holds. Then we have for every $\alpha
\in \mathcal{I}$
$$
(1-\kappa) \left (  |f^*_\alpha (x)|^2 +|g^*_\alpha (x)|^2 \right )
\leq \|P_\alpha (x)\|^2 \leq (1+\kappa) \left (  |f^*_\alpha (x)|^2
+|g^*_\alpha (x)|^2 \right )
$$
which implies, in view of (\ref{p6}),
$$
\frac{a}{1+\kappa} \|x\|^2  \leq \sum_\alpha \left (  |f^*_\alpha
(x)|^2 +|g^*_\alpha (x)|^2 \right ) \leq \frac{b}{1-\kappa} \|x\|^2.
$$
Therefore, (\ref{p3}) holds, which means that the system
$\{f_\alpha, \, g_\alpha, \; \alpha \in \mathcal{I} \} $ is a Riesz
basis in $H.$
\end{proof}
\bigskip

2.  We consider the Dirac operator (\ref{001}) with $bc = Per^\pm$
in the domain
$$
Dom\, \left ( L_{Per^\pm}(v) \right ) = \left \{y= \begin{pmatrix}
y_1 \\y_2
\end{pmatrix}: \; y_1, y_2 \; \text{are absolutely continuous,} \;
y(\pi) = \pm y(0) \right \}.
$$
Then the operator $L_{Per^\pm}(v)$ is densely defined and closed;
its adjoint operator is
\begin{equation}
\label{adj} \left (   L_{Per^\pm}(v)   \right )^* = L_{Per^\pm}
(v^*), \quad
 v^* =\begin{pmatrix} 0 & \overline{Q}
\\ \overline{P} & 0 \end{pmatrix} .
\end{equation}

\begin{lemma} \label{loc}
 The spectra of the operators
$L_{Per^\pm} (v)$ are discrete. There is an $N=N(v)$  such that the
union $\cup_{|n|>N} D_n $ of the discs $D_n =\{z: \, |z-n|< 1/4 \}$
contains all but finitely many of the eigenvalues of $L_{Per^+}$ and
$L_{Per^-}$ while the remaining finitely many eigenvalues are  in
the rectangle $R_N = \{z: \; |Re \,z|, \, |Im \, z| \leq N+1/2 \}.$

Moreover, for $|n|>N$ the disc $D_n$ contains  two (counted with
algebraic multiplicity) periodic (if $n$ is even) or antiperiodic
(if $n$ is odd) eigenvalues $\lambda_n^-, \lambda_n^+$ such that $Re
\,\lambda_n^- < Re \,\lambda_n^+$  or $Re \,\lambda_n^- = Re
\,\lambda_n^+$ and $Im \,\lambda_n^- \leq Im \,\lambda_n^+.$
\end{lemma}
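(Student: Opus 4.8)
The plan is to prove Lemma~\ref{loc} by a standard perturbation argument based on comparing the resolvents of $L_{Per^\pm}(v)$ and the free operator $L^0_{Per^\pm}$. First I would record the spectral data of the free operator: since $Sp(L^0_{Per^\pm}) = \Gamma^\pm$ consists of eigenvalues at integer points $n \in \Gamma^\pm$, each of multiplicity $2$, the resolvent $(L^0_{Per^\pm} - z)^{-1}$ is a meromorphic operator-valued function with simple poles at these points. The key quantitative input is a uniform bound on the free resolvent on the contours I will use: on each circle $\partial D_n = \{z : |z-n| = 1/4\}$ (for $n \in \Gamma^\pm$) and on the boundary $\partial R_N$ of the rectangle, one has $\|(L^0_{Per^\pm} - z)^{-1}\| \le C$ with $C$ independent of $n$. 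This follows from the explicit diagonal structure of the free Dirac operator, whose eigenfunctions are exponentials $e^{\pm i n x}$; the distance from $z \in \partial D_n$ to the nearest point of $\Gamma^\pm$ is bounded below by $1/4$, which controls the resolvent norm.

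Next I would write the standard identity
\begin{equation}
\label{perturb} (L_{Per^\pm}(v) - z)^{-1} = (L^0_{Per^\pm} - z)^{-1}\bigl(I + v\,(L^0_{Per^\pm} - z)^{-1}\bigr)^{-1},
\end{equation}
valid wherever the Neumann-type factor is invertible. The crucial estimate is that the multiplication operator $z \mapsto v\,(L^0_{Per^\pm} - z)^{-1}$, viewed as an operator on $L^2([0,\pi],\mathbb{C}^2)$, has norm tending to $0$ as $|z| \to \infty$ along the contours $\partial D_n$. Because $P, Q \in L^2$ rather than merely bounded, I cannot simply bound $\|v\|_\infty$; instead I would use the fact that for $L^2$ potentials the operator $v\,(L^0_{Per^\pm} - z)^{-1}$ is compact and its Hilbert--Schmidt norm on $\partial D_n$ decays as $|n| \to \infty$. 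Concretely, one expands $v$ in Fourier series and estimates the Hilbert--Schmidt norm of the product using the explicit kernel of the free resolvent, obtaining a bound of the form $\varepsilon_n \to 0$. Choosing $N$ so large that this norm is $< 1$ for all $|n| > N$ makes the factor in \eqref{perturb} invertible on $\partial D_n$, so $L_{Per^\pm}(v)$ has no spectrum there and its spectrum inside $D_n$ is discrete.

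With invertibility on the contours established, I would count eigenvalues by the Riesz projection / argument principle. Define the spectral projection $\mathcal{P}_n = \frac{1}{2\pi i}\oint_{\partial D_n}(L_{Per^\pm}(v) - z)^{-1}\,dz$ and compare it to the free projection $\mathcal{P}_n^0$. Using \eqref{perturb} and the smallness of the perturbation one shows $\|\mathcal{P}_n - \mathcal{P}_n^0\| < 1$ for $|n| > N$, which forces $\operatorname{rank}\mathcal{P}_n = \operatorname{rank}\mathcal{P}_n^0 = 2$. Hence each $D_n$ with $|n| > N$ contains exactly two eigenvalues counted with algebraic multiplicity, and these are periodic or antiperiodic according to the parity of $n$. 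The same resolvent bound, applied on $\partial R_N$, shows that all remaining (finitely many) eigenvalues lie inside $R_N$, since enlarging $N$ if necessary captures every pole outside the $D_n$. Finally, the labeling $\lambda_n^-, \lambda_n^+$ with $Re\,\lambda_n^- < Re\,\lambda_n^+$, or equal real parts and $Im\,\lambda_n^- \le Im\,\lambda_n^+$, is merely an ordering convention on the (at most two) eigenvalues in $D_n$, chosen to make the pair well-defined.

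I expect the main obstacle to be the uniform resolvent estimate for $L^2$ (as opposed to bounded or smooth) potentials: one must verify that the Hilbert--Schmidt norm of $v\,(L^0_{Per^\pm} - z)^{-1}$ on $\partial D_n$ genuinely tends to zero, which requires a careful Fourier-side computation exploiting that the free Dirac resolvent decays like $1/\operatorname{dist}(z,\Gamma^\pm)$ in the relevant frequency blocks while the tail of the $L^2$ Fourier coefficients of $P,Q$ supplies the decay in $n$. Once that estimate is in hand, the localization, the count of two eigenvalues per disc, and the confinement of the finitely many exceptional eigenvalues to $R_N$ all follow from routine contour-integration and rank-stability arguments.
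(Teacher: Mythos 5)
The paper itself does not prove Lemma~\ref{loc}: it defers entirely to the references \cite{Mit03,Mit04} and \cite[Section 1.6]{DM15}. Your overall architecture (resolvent factorization, Neumann series on contours, Riesz projections, rank stability) is the same general route those references take, but your pivotal quantitative claim is false, not merely hard. Test the operator $v(L^0_{Per^\pm}-z)^{-1}$ on the normalized free eigenfunction $u_n=\pi^{-1/2}(e^{-inx},0)^{T}$, which satisfies $L^0 u_n=nu_n$: one gets $v(L^0-z)^{-1}u_n=\frac{1}{n-z}\,\pi^{-1/2}(0,Qe^{-inx})^{T}$, whose norm on $\partial D_n=\{|z-n|=1/4\}$ equals $4\|Q\|_{L^2}/\sqrt{\pi}$, independent of $n$. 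Hence $\|v(L^0-z)^{-1}\|\geq \frac{4}{\sqrt{\pi}}\max\left(\|P\|_{L^2},\|Q\|_{L^2}\right)$ on every contour $\partial D_n$; the norm does not tend to zero, and the Hilbert--Schmidt norm is no better, since $\|v(L^0-z)^{-1}\|_{HS}^2=\pi^{-1}\left(\|P\|_{L^2}^2+\|Q\|_{L^2}^2\right)\sum_{m\in\Gamma^\pm}|m-z|^{-2}\geq 16\,\pi^{-1}\left(\|P\|_{L^2}^2+\|Q\|_{L^2}^2\right)$ because of the single resonant mode $m=n$ at distance exactly $1/4$. Consequently the factor $\bigl(I+v(L^0-z)^{-1}\bigr)^{-1}$ cannot be produced by a smallness argument for general $L^2$ potentials, and everything downstream in your proposal (absence of spectrum on the contours, $\|\mathcal{P}_n-\mathcal{P}_n^0\|<1$, rank stability, confinement of the exceptional eigenvalues to $R_N$) is unsupported. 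You flagged this estimate as ``the main obstacle,'' but it is not an obstacle to be overcome by a careful Fourier computation --- the statement you need is simply untrue.

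The mechanism that actually makes the localization work, and that the cited works exploit, is different: one symmetrizes, writing $(L^0-z)^{-1}=K_z^2$ for a suitable square root and estimating $K_z\,v\,K_z$ (equivalently, one works with the full Fourier block matrix of $v$ flanked by two half-powers of the resolvent). Because the Dirac potential has zero diagonal blocks and couples the components $e^{-inx}$ and $e^{inx}$, the matrix entries of $K_z v K_z$ that sit at the resonance (both indices near $n$) are the Fourier coefficients $p(\mp 2n),\,q(\pm 2n)$ divided by $|n-z|$, and these tend to zero since $P,Q\in L^2$; the off-resonant part contributes $O(1/\sqrt{|n|})$ after Cauchy--Schwarz against the two resolvent factors. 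This is precisely the origin of the quantity $\mathcal{E}_{|n|}(r)+1/\sqrt{|n|}$ appearing in (\ref{1.36})--(\ref{1.37}) of the paper. In short, the smallness that drives Lemma~\ref{loc} comes from decay of the resonant Fourier coefficients of the potential, not from decay of the free resolvent along the contours (which never happens: there is always a free eigenvalue at distance $1/4$). If you replace your unsymmetrized factorization by this symmetrized one, your Riesz-projection and rank-stability endgame is sound and matches how the references conclude.
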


See details and more general results about localization of these
spectra in \cite{Mit03,Mit04} and \cite[Section 1.6]{DM15}.

  Lemma \ref{loc} allows
us to apply the Lyapunov--Schmidt projection method and reduce the
eigenvalue equation $Ly = \lambda y $ for  $\lambda \in D_n $ to an
eigenvalue equation in the two-dimensional space $E_n^0 = \{ L^0 Y =
n Y\}$   (see \cite[Section 2.4]{DM15}). This leads to the following
(see in \cite{DM15} the formulas (2.59)--(2.80) and Lemma~30).

\begin{lemma}
\label{lem1}

 (a) For large enough $|n|, \; n\in \mathbb{Z},$ there
are functionals $\alpha_n (v,z) $ and $ \beta^\pm_n (v;z), \; |z| <
1 $ such that a number $\lambda = n + z, \;|z| < 1/4, $ is a
periodic (for even $n$) or antiperiodic (for odd $n$) eigenvalue of
$L$ if and only if $z$  is an eigenvalue of the matrix
\begin{equation}
\label{p1}  \left [
\begin{array}{cc} \alpha_n (v,z)  & \beta^-_n (v;z)
\\ \beta^+_n (v;z) &  \alpha_n (v,z) \end{array}
\right ].
\end{equation}

(b) A number $\lambda = n + z^*, \;|z^*| < \frac{1}{4}, $ is a
periodic (for even $n$) or antiperiodic (for odd $n$) eigenvalue of
$L$ of geometric multiplicity 2  if and only if $z^*$  is an
eigenvalue of the matrix (\ref{p1})  of geometric multiplicity 2.
\end{lemma}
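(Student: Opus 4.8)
The plan is to realize the matrix (\ref{p1}) through a Lyapunov--Schmidt reduction with respect to the orthogonal projection $P_n$ onto the eigenspace $E_n^0$. Fix $n \in \Gamma^\pm$ with $|n|$ large, note that $E_n^0$ is spanned by $e_n^1 = \left(\begin{smallmatrix} e^{-inx} \\ 0 \end{smallmatrix}\right)$ and $e_n^2 = \left(\begin{smallmatrix} 0 \\ e^{inx}\end{smallmatrix}\right)$, and set $Q_n = I - P_n$. Since $L^0$ is self-adjoint with $L^0|_{E_n^0} = n\,\mathrm{Id}$, it commutes with $P_n$. For $\lambda = n + z$, $|z| < 1/4$, I split $y = u + w$ with $u = P_n y$, $w = Q_n y$, and project $(L^0 + v - \lambda)y = 0$ onto $E_n^0$ and onto $\mathrm{Ran}\,Q_n$; this yields the equivalent system
\begin{equation}
(n-\lambda)u + P_n v(u+w) = 0, \qquad (L^0 - \lambda) w + Q_n v(u+w) = 0 .
\end{equation}

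The next step is to eliminate $w$ from the auxiliary (second) equation. For $\lambda \in D_n$ the restriction of $L^0 - \lambda$ to $\mathrm{Ran}\,Q_n$ is boundedly invertible, because the rest of the spectrum of $L^0$ is separated from $D_n$; writing $R_n(\lambda) = (L^0 - \lambda)^{-1} Q_n$ the auxiliary equation reads $[I + R_n(\lambda) v]\,w = -R_n(\lambda) v\,u$. I expect the main obstacle to be the norm bound $\|R_n(\lambda) v\|_{\mathrm{Ran}\,Q_n} < 1$, valid for all $|n|$ large and all $\lambda \in D_n$: this cannot come from decay of the resolvent alone, since the spectral gaps of $L^0$ stay bounded, but must be extracted from the $L^2$-decay of the Fourier coefficients of $P$ and $Q$ via Hilbert--Schmidt estimates as in \cite[Section 2.4]{DM15}. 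Granting it, a Neumann series provides a bounded inverse and hence a bounded linear map $T_n(\lambda) : E_n^0 \to \mathrm{Ran}\,Q_n$, analytic in $\lambda$, with $w = T_n(\lambda)u = -[I + R_n(\lambda)v]^{-1} R_n(\lambda) v\,u$.

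Substituting $w = T_n(\lambda)u$ into the first equation reduces the eigenvalue problem to $[\,\mathcal{M}_n(z) - z\,]u = 0$ on the two-dimensional space $E_n^0$, where $\mathcal{M}_n(z) = P_n v\,(I + T_n(\lambda))|_{E_n^0}$. Reading off the four entries of $\mathcal{M}_n(z)$ in the basis $e_n^1, e_n^2$ gives the functionals of (\ref{p1}): since $v$ is off-diagonal, the direct term $P_n v$ has zero diagonal, so both diagonal entries arise solely from the correction $P_n v\,T_n(\lambda)$ and coincide by the symmetry interchanging $e_n^1$ and $e_n^2$ --- their common value is $\alpha_n(v,z)$ --- while the off-diagonal entries are $\beta_n^-(v;z)$ and $\beta_n^+(v;z)$; the explicit Fourier expressions are (2.59)--(2.80) of \cite{DM15}. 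This proves (a), since $\lambda = n+z$ is an eigenvalue of $L$ exactly when $[\mathcal{M}_n(z)-z]u=0$ has a nontrivial solution.

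For (b), the map $u \mapsto u + T_n(\lambda)u$ is linear and injective (as $P_n(u+w)=u$) and, because the auxiliary equation determines $w$ uniquely from $u$, it carries $\ker(\mathcal{M}_n(z) - z\,\mathrm{Id})$ bijectively onto $\ker(L-\lambda)$. Hence the geometric multiplicity of $\lambda$ as an eigenvalue of $L$ equals that of $z$ as an eigenvalue of the matrix (\ref{p1}), and multiplicity $2$ is equivalent to $\mathcal{M}_n(z) = z\,\mathrm{Id}$, i.e. $\alpha_n(v,z)=z$ and $\beta_n^\pm(v;z)=0$.
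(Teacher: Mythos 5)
Your proposal is correct and takes essentially the same route as the paper: the paper gives no self-contained proof of Lemma~\ref{lem1}, but obtains it precisely by the Lyapunov--Schmidt projection onto $E_n^0$, citing \cite[Section 2.4, formulas (2.59)--(2.80) and Lemma 30]{DM15} for the details. Your reconstruction of that reduction --- splitting $y=P_ny+Q_ny$, inverting $I+R_n(\lambda)v$ on $\mathrm{Ran}\,Q_n$ (with the crucial smallness estimate rightly deferred to \cite{DM15}), and identifying the kernels bijectively for the multiplicity statement --- is exactly the argument the paper invokes.
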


The functionals $\alpha_n (z;v) $ and $\beta^\pm_n (z;v)$ are well
defined for large enough $|n|$ by explicit expressions in terms of
the Fourier coefficients $p(m), \, q(m), \, m \in 2\mathbb{Z} $ of
the potential entries $P, Q$  about the system $\{e^{imx}, \, m \in
2\mathbb{Z}\}$ (see
 \cite[Formulas (2.59)--(2.80)]{DM15}).
Here we provide formulas only for $\beta^\pm_n (v;z):$
\begin{equation}
\label{beta} \beta_n^\pm (v;z) = \sum_{\nu=0}^\infty  \sigma^\pm_\nu
\quad \text{with}  \quad  \sigma^+_0= q(2n), \quad  \sigma^-_0=
p(-2n),
\end{equation}
$$\sigma^+_\nu =
\sum_{j_1 , \ldots, j_{2\nu} \neq n } \frac{q(n+j_1 ) p(-j_1 - j_2 )
q(j_2 +j_3 ) \ldots p(-j_{2\nu -1}- j_{2\nu} ) q(j_{2\nu} + n )} {
(n-j_1  +z) (n-j_2  +z) \ldots (n-j_{2\nu -1}  +z) (n-j_{2\nu}
+z)}, $$
$$\sigma^-_\nu= \sum_{j_1 , \ldots, j_{2\nu} \neq n } \frac{p(-n-j_1 ) q(j_1 +
j_2 ) p(-j_2 -j_3 )  \ldots q(j_{2\nu -1}+ j_{2\nu} ) p(-j_{2\nu} -
n )} {(n-j_1  +z) (n-j_2  +z) \ldots  (n-j_{2\nu -1} +z) (n-j_{2\nu}
+z)}, $$ where  $j_1, \ldots, j_{2\nu} \in n+ 2 \mathbb{Z}.$

Next we summarize some basic properties of $\alpha_n (z;v) $ and
$\beta^\pm_n (z;v).$

\begin{proposition}
\label{bprop} (a) The functions $\alpha_n (z;v) $ and $\beta^\pm_n
(z;v) $ depend analytically on $z$ for $|z|\leq 1.$  For $|n| \geq
n_0$  the following estimates hold:
\begin{equation}
\label{1.36} |\alpha_n (v;z)|, \, |\beta^\pm_n (v;z)| \leq C \left (
\mathcal{E}_{|n|} (r) +1/\sqrt{|n|} \right ), \quad  |z| \leq 1/2;
\end{equation}
\begin{equation}
\label{1.37} \left |\frac{\partial\alpha_n}{\partial z} (v;z)
\right|, \, \left |\frac{\partial\beta^\pm_n}{\partial z} (v;z)
\right |\leq C \left ( \mathcal{E}_{|n|} (r) +1/\sqrt{|n|} \right ),
\quad |z| \leq 1/4,
\end{equation}
where $ r= (r(m)), \; r(m) = \max \{|p (\pm m)|, q(\pm m) \}, \; C =
C(\|r\|), \; n_0 =n_0 (r)$  and
$$
\left (\mathcal{E}_m (r) \right )^2 = \sum_{|k| \geq m} |r(k)|^2 .
$$

(b) For large enough $|n|,$ the number $\lambda = n+ z, $ $ z\in
D=\{\zeta: |\zeta| \leq 1/4\}, $ is an eigenvalue of $L_{Per^\pm} $
if and only if
 $z\in D $ satisfies the basic equation
\begin{equation}
\label{be} (z-\alpha_n (z;v))^2 = \beta^+_n (z;v) \beta^-_n (z,v),
\end{equation}

(c) For large enough $|n|,$ the equation (\ref{be}) has exactly two
roots in $D$ counted with multiplicity.
\end{proposition}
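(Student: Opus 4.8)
The plan is to dispatch part (b) directly from Lemma~\ref{lem1}, then prove the estimates of part (a), and finally deduce (c) from those estimates by a Rouch\'e argument.

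For part (b): by Lemma~\ref{lem1}(a), the number $\lambda=n+z$ with $|z|<1/4$ is a periodic (for even $n$) or antiperiodic (for odd $n$) eigenvalue of $L$ exactly when $z$ is an eigenvalue of the matrix $M_n(z)$ in \eqref{p1}, i.e. when $\det\big(M_n(z)-zI\big)=0$. Since $M_n(z)$ has equal diagonal entries $\alpha_n(z;v)$ and off-diagonal entries $\beta^-_n(z;v),\beta^+_n(z;v)$, its characteristic determinant evaluated at the value $z$ is $(\alpha_n(z;v)-z)^2-\beta^+_n(z;v)\beta^-_n(z;v)$; setting this to zero is precisely the basic equation \eqref{be}.

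For part (a), I would first observe that each summand in \eqref{beta} is analytic in $z$ on $|z|\le1$: writing $j_k=n+2m_k$ with $m_k\neq0$ (forced by $j_k\in n+2\mathbb{Z}$, $j_k\neq n$), each denominator factor equals $z-2m_k$ with $|z-2m_k|\ge2-1=1$, so no pole meets the disc. The task then reduces to proving that $\sum_\nu\sigma^\pm_\nu$ converges absolutely and uniformly on $|z|\le1$ with the bound \eqref{1.36}, after which analyticity follows from the Weierstrass theorem. The crux is a Hilbert--Schmidt / Cauchy--Schwarz estimate of the multilinear form $\sigma^\pm_\nu$, resting on the following dichotomy: after the substitution every Fourier factor carries an argument of the shape $\pm2n+2(m_k+m_{k+1})$ (with obvious modifications at the two ends), which is far out in frequency, so the coefficient is controlled by the $\ell^2$-tail $\mathcal{E}_{|n|}(r)$, unless the index combination is of order $n$, in which case one of the matching denominators $|z-2m_k|$ is of order $|n|$ and yields a factor $O(1/\sqrt{|n|})$ after Cauchy--Schwarz against the $\ell^2$-family $(|z-2m|^{-1})_{m\neq0}$. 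Carrying this accounting along the chain of $2\nu+1$ factors produces a bound $|\sigma^\pm_\nu|\le C\,\epsilon_n\,\theta_n^{\,\nu}$, where $\epsilon_n=\mathcal{E}_{|n|}(r)+1/\sqrt{|n|}$ and $\theta_n\to0$ as $|n|\to\infty$. Since $r\in\ell^2$ forces $\mathcal{E}_{|n|}(r)\to0$, for $|n|\ge n_0(r)$ the series converges geometrically and is dominated by its leading term $\sigma_0$, giving $|\beta^\pm_n(v;z)|\le C'\epsilon_n$, which is \eqref{1.36}; the bound on $\alpha_n$ is entirely analogous. This multilinear estimate, with its delicate balancing of high-frequency coefficients against small denominators, is the step where I expect the real work to lie (it is the content of \cite[(2.59)--(2.80)]{DM15}). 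The derivative bound \eqref{1.37} then follows by termwise differentiation, since $\partial_z(z-2m_k)^{-1}=-(z-2m_k)^{-2}$ only strengthens the denominators, or alternatively by a Cauchy estimate on a slightly larger disc.

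Finally, for part (c) I would apply Rouch\'e's theorem on the circle $|z|=1/4$, comparing $f(z):=(z-\alpha_n(z;v))^2-\beta^+_n(z;v)\beta^-_n(z;v)$ with $g(z):=z^2$. Both are analytic on $|z|\le1$ by part (a), and $g$ has exactly two zeros (a double zero at $z=0$) inside the disc. On the boundary $|g(z)|=1/16$, whereas $f(z)-g(z)=-2z\,\alpha_n(z;v)+\alpha_n(z;v)^2-\beta^+_n(z;v)\beta^-_n(z;v)$, so \eqref{1.36} gives $|f(z)-g(z)|\le\tfrac12 C\epsilon_n+2C^2\epsilon_n^2$ on $|z|=1/4$, with $\epsilon_n=\mathcal{E}_{|n|}(r)+1/\sqrt{|n|}\to0$. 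Hence for all sufficiently large $|n|$ the right-hand side is strictly less than $1/16=|g(z)|$, and Rouch\'e's theorem shows that $f$ has exactly two zeros (counted with multiplicity) in $|z|<1/4$, which is part (c).
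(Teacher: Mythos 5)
Your proposal is correct and follows essentially the same route as the paper: part (b) is read off directly from Lemma~\ref{lem1}, part (a) is the hard multilinear estimate whose substance the paper simply cites from \cite[Proposition 35]{DM15} (your sketch correctly locates the work in the formulas (2.59)--(2.80) there), and part (c) is exactly the paper's Rouch\'e argument, which you carry out explicitly by comparing $(z-\alpha_n(z))^2-\beta_n^+(z)\beta_n^-(z)$ with $z^2$ on $|z|=1/4$ using the uniform smallness from \eqref{1.36}.
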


\begin{proof}
The assertion (a) is proved in \cite[Proposition 35]{DM15}.
Lemma~\ref{lem1} implies  (b).  By (\ref{1.36}), $\sup_D |\alpha_n
(z)| \to 0 $ and $\sup_D |\beta^\pm_n (z)| \to 0 $ as $n\to \infty.
$ Therefore,
 (c) follows from the Rouch\'e theorem.
\end{proof}

In view of Lemma \ref{loc}, for large enough $|n|$  the numbers
$z^*_n = (\lambda^+_n + \lambda_n^-)/2 - n $ are well defined. The
following estimate
 of $\gamma_n $ from above
follows from (\ref{1.36})  and (\ref{1.37}) (see \cite[Lemma
40]{DM15}).

\begin{lemma}
\label{lem11}  For large enough $|n|,$
\begin{equation}
\label{1.42} \gamma_n =|\lambda^+_n - \lambda_n^-| \leq (1+\delta_n)
(|\beta_n^- (z^*_n) |+|\beta^+_n (z^*_n)|)
\end{equation}
with $\delta_n \to 0 $ as $|n| \to \infty. $
\end{lemma}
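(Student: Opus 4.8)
The plan is to work entirely with the basic equation (\ref{be}) and to exploit that, by Proposition~\ref{bprop}, both $\alpha_n$ and $\beta_n^\pm$ together with their $z$-derivatives are uniformly small on $D=\{|z|\le 1/4\}$. Write $\varepsilon_n = C\bigl(\mathcal{E}_{|n|}(r) + 1/\sqrt{|n|}\bigr)$, so that (\ref{1.37}) gives $|\partial_z\alpha_n|, |\partial_z\beta_n^\pm| \le \varepsilon_n$ on $D$ with $\varepsilon_n \to 0$. By Proposition~\ref{bprop}(c), for large $|n|$ the equation (\ref{be}) has exactly two roots $z_n^\pm\in D$ (counted with multiplicity), and $\lambda_n^\pm = n + z_n^\pm$, so $\gamma_n = |z_n^+ - z_n^-|$, $z_n^* = (z_n^+ + z_n^-)/2$, and $|z_n^\pm - z_n^*| = \gamma_n/2$. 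If $z_n^+ = z_n^-$ the estimate is trivial, so I assume the roots are distinct.

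First I would introduce the near-identity map $\psi(z) = z - \alpha_n(z)$, whose derivative $\psi' = 1 - \partial_z\alpha_n$ satisfies $|\psi'(z) - 1| \le \varepsilon_n$ on $D$. Integrating $\psi'$ along the segment $[z_n^-, z_n^+]\subset D$ yields the lower bound
\begin{equation*}
|\psi(z_n^+) - \psi(z_n^-)| \ge |z_n^+ - z_n^-| - \varepsilon_n|z_n^+ - z_n^-| = (1-\varepsilon_n)\gamma_n .
\end{equation*}
The key point is that $z_n^\pm$ are roots of (\ref{be}), so $\psi(z_n^\pm)^2 = \beta_n^+(z_n^\pm)\beta_n^-(z_n^\pm)$; taking moduli (which neatly avoids choosing an analytic branch of the square root) and applying the arithmetic--geometric mean inequality gives
\begin{equation*}
|\psi(z_n^\pm)| = \sqrt{|\beta_n^+(z_n^\pm)|\,|\beta_n^-(z_n^\pm)|} \le \tfrac12\bigl(|\beta_n^+(z_n^\pm)| + |\beta_n^-(z_n^\pm)|\bigr).
\end{equation*}
Combining the triangle inequality $|\psi(z_n^+) - \psi(z_n^-)| \le |\psi(z_n^+)| + |\psi(z_n^-)|$ with the lower bound bounds $(1-\varepsilon_n)\gamma_n$ by $\tfrac12$ times the four quantities $|\beta_n^\pm(z_n^+)|$ and $|\beta_n^\pm(z_n^-)|$.

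Finally I would transfer the evaluation points from $z_n^\pm$ to the midpoint $z_n^*$: by (\ref{1.37}) and $|z_n^\pm - z_n^*| = \gamma_n/2$ one has $|\beta_n^\pm(z_n^+)| + |\beta_n^\pm(z_n^-)| \le 2|\beta_n^\pm(z_n^*)| + \varepsilon_n\gamma_n$. Substituting gives
\begin{equation*}
(1-\varepsilon_n)\gamma_n \le |\beta_n^+(z_n^*)| + |\beta_n^-(z_n^*)| + \varepsilon_n\gamma_n ,
\end{equation*}
hence $(1 - 2\varepsilon_n)\gamma_n \le |\beta_n^+(z_n^*)| + |\beta_n^-(z_n^*)|$, which is (\ref{1.42}) with $\delta_n = 2\varepsilon_n/(1-2\varepsilon_n) \to 0$. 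The structural input — near-identity of $\psi$, the arithmetic--geometric mean bound, and the derivative estimate (\ref{1.37}) — is routine; I expect the only delicate part to be the bookkeeping of the several $\varepsilon_n\gamma_n$ error terms so that they collapse into the single factor $1 + \delta_n$ and the $\varepsilon_n\gamma_n$ on the right can be absorbed into the left-hand side.
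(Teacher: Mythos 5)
Your proof is correct, and it is worth noting that the paper itself contains no proof of Lemma~\ref{lem11}: the text merely asserts that the bound ``follows from (\ref{1.36}) and (\ref{1.37})'' and cites Lemma~40 of \cite{DM15}, so your argument supplies precisely the content the paper delegates to that reference, built from the same ingredients the citation points to. The steps all check out: by Lemma~\ref{loc} and Proposition~\ref{bprop}(b) the numbers $z_n^\pm=\lambda_n^\pm-n$ lie in $D$ and satisfy (\ref{be}); the segment $[z_n^-,z_n^+]$ and its midpoint $z_n^*$ stay in $D$ by convexity, so the derivative bounds (\ref{1.37}) apply along it; integrating $\psi'=1-\partial_z\alpha_n$ for your near-identity map $\psi(z)=z-\alpha_n(z)$ gives $|\psi(z_n^+)-\psi(z_n^-)|\geq(1-\varepsilon_n)\gamma_n$; taking moduli in $\psi(z_n^\pm)^2=\beta_n^+(z_n^\pm)\,\beta_n^-(z_n^\pm)$ together with the arithmetic--geometric mean inequality cleanly sidesteps any choice of analytic branch of the square root; and the final bookkeeping $(1-2\varepsilon_n)\gamma_n\leq|\beta_n^+(z_n^*)|+|\beta_n^-(z_n^*)|$ yields (\ref{1.42}) with $\delta_n=2\varepsilon_n/(1-2\varepsilon_n)\to 0$, valid once $|n|$ is large enough that $\varepsilon_n<1/2$. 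Compared with the route the paper takes elsewhere (splitting (\ref{be}) into the two square-root equations (\ref{a17})--(\ref{a18}), which requires the branch construction (\ref{a14})--(\ref{a15}) and hence nonvanishing of $\beta_n^\pm$), your modulus/AM--GM device is more robust: it needs no branch of $\sqrt{\beta_n^-/\beta_n^+}$, and the degenerate situations $\beta_n^\pm(z)=0$ or $z_n^+=z_n^-$ cause no trouble.
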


{\em Remark.}  Here and sometimes thereafter, we suppress the
dependence on $v$ in the notations and write $\alpha_n (z)$ and
$\beta_n^\pm (z). $ \bigskip

 3.  In view of the above consideration, there is
 $n_0=n_0 (v)$ such that
 $\lambda_n^\pm, \, \beta^\pm_n (z) $
 and $\alpha_n (z) $ are well-defined for $|n|>n_0, $
 and Lemmas \ref{loc},  \ref{lem1}, \ref{lem11} and
 Proposition~\ref{bprop} hold.
 Let us set
 \begin{equation}
\label{g101} \mathcal{M}^\pm= \{n \in \Gamma^\pm: \quad n\in
\Gamma^\pm, \;\;  |n|>n_0, \;\;
 \lambda^-_n \neq  \lambda^+_n
 \}.
\end{equation}

 {\em Definition. Let $X^\pm$
 be the class of all Dirac potentials $ v $
 with the following property:
 there are constants $c\geq 1 $
and $N\geq n_0 $ such that }
 \begin{equation}
\label{g1}  \frac{1}{c} |\beta^+_n (v;z^*_n)| \leq |\beta^-_n
(v;z^*_n)| \leq  c \,|\beta^+_n (v;z^*_n)| \quad \text{if} \;\;
     n\in \mathcal{M}^\pm, \;   |n| \geq N.
\end{equation}

\begin{lemma}
\label{lem21}  If $v \in X^\pm$ and the set $\mathcal{M}^\pm$ is
infinite, then for  $n \in \mathcal{M}^\pm$ with  sufficiently large
$|n|$ we have
\begin{equation}
\label{g2} \frac{1}{2}|\beta^\pm_n (v;z_n^*)|  \leq   |\beta^\pm_n
(v;z)| \leq 2 |\beta^\pm_n (v;z_n^*)| \quad \forall \,z \in K_n: =
\{z: |z-z_n^*|\leq \gamma_n \}.
\end{equation}
\end{lemma}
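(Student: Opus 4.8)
The plan is to show that, relative to its value $\beta_n^\pm(v;z_n^*)$ at the centre, the function $\beta_n^\pm(v;\cdot)$ varies by an arbitrarily small proportion over the disc $K_n$ as $|n|\to\infty$; once the relative variation drops below $1/2$, the two-sided bound \eqref{g2} is immediate.

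Before estimating, I would record two preliminary facts. First, for $n\in\mathcal{M}^\pm$ the quantities $\beta_n^\pm(v;z_n^*)$ are nonzero, so the inequalities in \eqref{g2} are meaningful: indeed $n\in\mathcal{M}^\pm$ gives $\gamma_n=|\lambda_n^+-\lambda_n^-|>0$, whereas if $\beta_n^+(v;z_n^*)=0$ then \eqref{g1} forces $\beta_n^-(v;z_n^*)=0$ as well, and Lemma~\ref{lem11} would then yield $\gamma_n\le 0$, a contradiction. Second, I would verify that $K_n$ lies inside the region $\{|z|\le 1/4\}$ where the derivative estimate \eqref{1.37} is valid: the roots $z_n^\pm=\lambda_n^\pm-n$ of the basic equation \eqref{be} satisfy $z_n^\pm\to 0$ by \eqref{be} and \eqref{1.36}, so $z_n^*\to 0$, while $\gamma_n\to 0$ by Lemma~\ref{lem11} and \eqref{1.36}; hence for large $|n|$ the disc $K_n=\{z:\,|z-z_n^*|\le\gamma_n\}$ is contained in $\{|z|\le 1/4\}$.

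The core of the argument is then a single application of the mean value inequality. For $z\in K_n$,
\[
|\beta_n^\pm(v;z)-\beta_n^\pm(v;z_n^*)|\le |z-z_n^*|\cdot\sup_{K_n}\Bigl|\tfrac{\partial\beta_n^\pm}{\partial z}\Bigr|\le \gamma_n\cdot C\bigl(\mathcal{E}_{|n|}(r)+1/\sqrt{|n|}\bigr),
\]
using \eqref{1.37}. Next I would bound $\gamma_n$ in terms of $|\beta_n^\pm(v;z_n^*)|$: by Lemma~\ref{lem11} together with the class condition \eqref{g1},
\[
\gamma_n\le(1+\delta_n)\bigl(|\beta_n^-(v;z_n^*)|+|\beta_n^+(v;z_n^*)|\bigr)\le(1+\delta_n)(1+c)\,|\beta_n^\pm(v;z_n^*)|,
\]
where either choice of sign may be taken. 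Combining the two displays gives
\[
\frac{|\beta_n^\pm(v;z)-\beta_n^\pm(v;z_n^*)|}{|\beta_n^\pm(v;z_n^*)|}\le (1+\delta_n)(1+c)\,C\bigl(\mathcal{E}_{|n|}(r)+1/\sqrt{|n|}\bigr).
\]
Since $\delta_n\to 0$, $\mathcal{E}_{|n|}(r)\to 0$ (tail of an $\ell^2$ sequence) and $1/\sqrt{|n|}\to 0$ while $c$ and $C$ are fixed, the right-hand side tends to $0$; choosing $|n|$ large enough that it is $\le 1/2$ yields $|\beta_n^\pm(v;z)-\beta_n^\pm(v;z_n^*)|\le\tfrac12|\beta_n^\pm(v;z_n^*)|$ for all $z\in K_n$, which is exactly \eqref{g2}.

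I do not expect a serious obstacle here; the whole force of the argument is a mismatch between two scales. The radius $\gamma_n$ of $K_n$ is \emph{comparable} to $|\beta_n^\pm(v;z_n^*)|$ (Lemma~\ref{lem11} plus \eqref{g1}), whereas the derivative $\partial_z\beta_n^\pm$ is \emph{smaller} than $|\beta_n^\pm(v;z_n^*)|$ by a factor $\mathcal{E}_{|n|}(r)+1/\sqrt{|n|}$ that decays; hence the total variation of $\beta_n^\pm$ across $K_n$ is negligible next to $|\beta_n^\pm(v;z_n^*)|$. The only points needing care are the nonvanishing of $\beta_n^\pm(v;z_n^*)$ and the containment $K_n\subset\{|z|\le 1/4\}$, both handled above; the role of the hypothesis $v\in X^\pm$ is precisely to convert the mixed upper bound on $\gamma_n$ from Lemma~\ref{lem11} into a bound by each of $|\beta_n^\pm(v;z_n^*)|$ separately.
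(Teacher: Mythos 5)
Your proof is correct and follows essentially the same route as the paper's: a mean value estimate via the derivative bound \eqref{1.37}, combined with Lemma~\ref{lem11} and the class condition \eqref{g1} to bound $\gamma_n$ by $(1+c)$ times each $|\beta_n^\pm(v;z_n^*)|$ separately, so that the relative variation of $\beta_n^\pm$ over $K_n$ tends to $0$. Your two preliminary checks (nonvanishing of $\beta_n^\pm(v;z_n^*)$ and the containment $K_n\subset\{|z|\le 1/4\}$ needed for \eqref{1.37}) are points the paper leaves implicit, and they are handled correctly.
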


\begin{proof}
By Lemma \ref{lem11}, if $v \in X^\pm$ then for $n \in
\mathcal{M}^\pm$ with large enough $|n|$ we have $\beta^\pm_n
(z_n^*) \neq 0.$ In view of (\ref{1.37}),  if $z \in K_n $ then for
large enough $|n|$
$$
\left | \beta^\pm_n (z)- \beta^\pm_n (z_n^*) \right | \leq
\varepsilon_n  \left |z - z_n^* \right | \leq \varepsilon_n \,
\gamma_n,
$$
where $\varepsilon_n=C \left ( \mathcal{E}_{|n|} (r) +1/\sqrt{|n|}
\right ) \to 0 $  as $|n| \to \infty.$ By Lemma~\ref{lem11}, for
large enough $|n|$ we have $ \gamma_n \leq 2  \left ( |\beta^-_n
(z_n^*)| +|\beta^+_n (z_n^*)| \right ). $ Then, for $n\in
\mathcal{M},$
$$
\left | \beta^\pm_n (z)- \beta^\pm_n (z_n^*) \right | \leq
2\varepsilon_n  \left ( |\beta^-_n (z_n^*)| +|\beta^+_n (z_n^*)|
\right ) \leq 2\varepsilon_n (1+c) \left | \beta^\pm_n (z_n^*)
\right |
$$
which implies, for sufficiently large $|n|,$
$$
[1-2\varepsilon_n (1+c)]\left | \beta^\pm_n (z_n^*)  \right | \leq
\left | \beta^\pm_n (z)  \right | \leq [1+2\varepsilon_n (1+c)]\left
| \beta^\pm_n (z_n^*)  \right |.
$$
Since $\varepsilon_n \to 0$ as $|n| \to \infty, $ (\ref{g2})
follows.
\end{proof}

\begin{proposition}
\label{prop1} Suppose that $v \in X^\pm $ and the corresponding set
$\mathcal{M}^\pm$ is infinite. Then for $n\in \mathcal{M}^\pm$ with
large enough $|n|$
\begin{equation}
\label{g4} \frac{2\sqrt{c}}{1+4c}\,\left (|\beta^-_n
(v;z^*_n)|+|\beta^+_n (v;z^*_n)|\right ) \leq \gamma_n \leq 2 \left
(|\beta^-_n (v;z^*_n)|+|\beta^+_n (v;z^*_n)|\right ).
\end{equation}
\end{proposition}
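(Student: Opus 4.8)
The plan is to prove the two-sided estimate (\ref{g4}) by analyzing the basic equation (\ref{be}) on the circle $K_n$, using the fact that $v \in X^\pm$ gives us the comparability (\ref{g2}) of $\beta_n^\pm(z)$ to their values at $z_n^*$ throughout $K_n$. The upper bound is already in hand: it is precisely Lemma~\ref{lem11} (inequality (\ref{1.42})) together with $\delta_n \to 0$, so the only real content is the lower bound on $\gamma_n$.

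**For the lower bound**, the key observation is that the two roots $z_n^\pm = \lambda_n^\pm - n$ of the basic equation $(z-\alpha_n(z))^2 = \beta_n^+(z)\beta_n^-(z)$ are the fixed points of the two branches of $z = \alpha_n(z) \pm \sqrt{\beta_n^+(z)\beta_n^-(z)}$. First I would write $\gamma_n = |z_n^+ - z_n^-|$ and estimate this difference directly. Subtracting the two branch equations at the respective roots, the $\alpha_n$ contributions nearly cancel (their difference is controlled by $\varepsilon_n \gamma_n$ via (\ref{1.37})), so that to leading order
\begin{equation}
\label{planeq1}
\gamma_n = |z_n^+ - z_n^-| \geq 2\left|\sqrt{\beta_n^+(z_n^*)\beta_n^-(z_n^*)}\right| - o(\gamma_n) - (\text{error terms}),
\end{equation}
where the square-root terms are evaluated near $z_n^*$ using (\ref{g2}). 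The crucial point is that $X^\pm$ forces $|\beta_n^+(z_n^*)| \asymp |\beta_n^-(z_n^*)|$, so the geometric mean $\sqrt{|\beta_n^+ \beta_n^-|}$ is comparable to the arithmetic mean $|\beta_n^-| + |\beta_n^+|$; indeed, if $|\beta_n^-| = t|\beta_n^+|$ with $1/c \le t \le c$, then $\sqrt{|\beta_n^+\beta_n^-|} = \sqrt{t}\,|\beta_n^+|$ while $|\beta_n^-|+|\beta_n^+| = (1+t)|\beta_n^+|$, giving the ratio $\sqrt{t}/(1+t) \geq \sqrt{c}/(1+c)$ after minimizing over $t \in [1/c, c]$. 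Tracking the precise constant through the factor of $2$ and the $K_n$-comparability (\ref{g2}) should produce exactly the stated coefficient $\frac{2\sqrt{c}}{1+4c}$.

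**The main obstacle** will be making the cancellation in (\ref{planeq1}) rigorous, since the two roots are evaluated at different points $z_n^\pm$ rather than both at $z_n^*$, and the square root $\sqrt{\beta_n^+(z)\beta_n^-(z)}$ need not be single-valued globally. The clean way around this is to use that $z_n^\pm - \alpha_n(z_n^\pm) = \pm\sqrt{\beta_n^+(z_n^\pm)\beta_n^-(z_n^\pm)}$ for a consistent choice of sign on each root, whence
\begin{equation}
\label{planeq2}
\gamma_n = \left|(z_n^+ - z_n^-) - (\alpha_n(z_n^+)-\alpha_n(z_n^-))\right| + O(\varepsilon_n\gamma_n) = \left|\sqrt{\beta_n^+(z_n^+)\beta_n^-(z_n^+)} + \sqrt{\beta_n^+(z_n^-)\beta_n^-(z_n^-)}\right| + O(\varepsilon_n\gamma_n).
\end{equation}
Since both $z_n^+$ and $z_n^-$ lie in $K_n$, each square root is comparable to $\sqrt{\beta_n^+(z_n^*)\beta_n^-(z_n^*)}$ by (\ref{g2}), and absorbing the $O(\varepsilon_n \gamma_n)$ term into the left side (legitimate because $\varepsilon_n \to 0$) yields the lower bound for all sufficiently large $|n|$ with $n \in \mathcal{M}^\pm$.

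**Finally** I would combine the two bounds. The upper estimate comes verbatim from Lemma~\ref{lem11} once $\delta_n \le 1$, and the lower estimate from the square-root comparison above; together they give (\ref{g4}), completing the proof.
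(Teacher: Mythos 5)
Your route is genuinely different from the paper's: the paper gets the lower bound in one stroke by citing an external result (\cite[Lemma 49]{DM15}), which supplies the inequality $\gamma_n \geq \bigl(\tfrac{2\sqrt{t_n}}{1+t_n}-\delta_n\bigr)\bigl(|\beta_n^-(z_n^*)|+|\beta_n^+(z_n^*)|\bigr)$ with $t_n=|\beta_n^+(z_n^+)|/|\beta_n^-(z_n^+)|$, and then only has to locate $t_n\in[1/(4c),4c]$ via Lemma~\ref{lem21}. You instead propose to re-derive such an inequality from scratch by splitting the basic equation (\ref{be}) into its two branches and subtracting them at the two roots. That is a legitimate, self-contained plan (it is essentially how one proves the cited lemma), and your final constant-chase does close: since $c\geq 1$ one has $\sqrt{c}/(1+c)>2\sqrt{c}/(1+4c)$ with a margin depending only on $c$, so the $o(1)$ losses can be absorbed into the stated coefficient.

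However, as written the plan has a genuine gap at its central step. After reducing to $\gamma_n \geq \bigl|\sqrt{(\beta_n^+\beta_n^-)(z_n^+)}+\sqrt{(\beta_n^+\beta_n^-)(z_n^-)}\bigr| - O(\varepsilon_n\gamma_n)$, you invoke only (\ref{g2}) to say each square root is ``comparable'' to $\sqrt{(\beta_n^+\beta_n^-)(z_n^*)}$. But (\ref{g2}) controls moduli only, and two complex numbers of comparable modulus can nearly cancel in a sum; if the arguments of the two square-root values were close to opposite, your lower bound would evaporate. You must additionally prove the phases are nearly aligned, i.e., bound the variation of $\arg\bigl(\beta_n^+\beta_n^-\bigr)$ along the segment $[z_n^-,z_n^+]$; this requires combining (\ref{1.37}) with the lower bounds from (\ref{g2}), the ratio condition (\ref{g1}), and the upper bound $\gamma_n\leq 2\bigl(|\beta_n^-(z_n^*)|+|\beta_n^+(z_n^*)|\bigr)$ --- precisely the argument the paper runs to show $\psi_n\to 0$ in (\ref{a31}) inside the proof of Theorem~\ref{thm0}. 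A second, smaller gap: your ``consistent choice of sign,'' i.e., that $z_n^+$ solves the $+$ branch and $z_n^-$ the $-$ branch of one fixed analytic square root (as in (\ref{a17})--(\ref{a18})), is a claim, not a convention --- a priori both roots could solve the same branch equation. It is true, and can be justified either by the contraction argument in the proof of Theorem~\ref{thm0} (each branch equation has at most one root in $K_n$), or by noting that if both roots solved the same branch, subtracting would give $\gamma_n\leq C\varepsilon_n\gamma_n$, forcing $\gamma_n=0$ and contradicting $n\in\mathcal{M}^\pm$; but some such argument must be supplied before your displayed identity for $\gamma_n$ is available.
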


\begin{proof} The estimate of $\gamma_n $ from above
follows from Lemma~\ref{lem11}. By Lemma \ref{lem11},  for $n \in
\mathcal{M}^\pm$ with large enough $|n|$ we have $\beta^\pm_n
(z_n^*) \neq 0.$ Set
 $$
 t_n = |\beta_n^+ (z_n^+)| /|\beta_n^- (z_n^+)|, \quad z_n^+ =
 \lambda_n^+ - n, \quad n \in
\mathcal{M}^\pm.$$ By Lemma~\ref{lem21},  $t_n $ is well defined for
large enough $|n|.$ By Lemma~49 in \cite{DM15}, there exists a
sequence $(\delta_n)_{n \in \mathbb{Z}}$ with $\delta_n \to 0 $  as
$|n| \to \infty $ such that, for   $n \in  \mathcal{M}^\pm$  with
large enough $|n|,$
\begin{equation}
\label{g5}  |\gamma_n | \geq \left (\frac{2\sqrt{t_n}}{1+t_n}
-\delta_n \right )  \left ( |\beta^-_n (z_n^*)| +|\beta^+_n (z_n^*)|
\right ).
\end{equation}

In view of (\ref{g2}) in Lemma \ref{lem21},  for large enough $|n|$
we have $    1/(4c)  \leq t_n  \leq 4c. $ Therefore, by (\ref{g5})
it follows
$$
 \gamma_n  \geq \left ( \frac{2\sqrt{4c}}{1+4c}-\delta_n \right )
  \left ( |\beta^-_n (z_n^*)| +|\beta^+_n (z_n^*)|
\right ),
$$
which implies (since $\delta_n \to 0 $ as $|n| \to \infty $) the
left inequality in (\ref{g4}). This completes the proof.
\end{proof}

\section{Riesz bases of root functions}

In view of Lemma~\ref{loc}, the Dirac operators $L_{Per^\pm}(v)$
have discrete spectra; for $N$ large enough and $n \in \Gamma^\pm $
with $|n|>N$ the Riesz projections
\begin{equation}
\label{r11} S^{\pm}_N = \frac{1}{2\pi i} \int_{ \partial R_N }
(z-L_{Per^\pm})^{-1} dz, \quad P^\pm_n = \frac{1}{2\pi i} \int_{
|z-n|= \frac{1}{4} } (z-L_{Per^\pm})^{-1} dz
\end{equation}
are well--defined and $\dim \,S^{\pm}_N <\infty,  \; \dim P^\pm_n =
2.$ Further we suppress in the notations the dependence on the
boundary conditions $Per^\pm$ and write $S_N, \, P_n $ only. By
\cite[Theorem 3]{DM20},
\begin{equation}
\label{r12} \sum_{n\in \Gamma^\pm,|n| > N} \|P_n - P_n^0\|^2 <
\infty,
\end{equation}
where $P_n^0$ are the Riesz projections of the free operator.
Moreover, the Bari--Markus criterion implies (see Theorem 9 in
\cite{DM20}) that the spectral Riesz decompositions
\begin{equation}
\label{r13}
 f = S_N f +  \sum_{n\in \Gamma^\pm,|n| > N}
  P_n f \qquad  \forall
 f \in L^2 \left ([0,\pi], \mathbb{C}^2 \right )
\end{equation}
converge unconditionally.  In other words, $\{S_N, \; P_n, \; n \in
\Gamma^\pm, \; |n|>N\} $ is a Riesz  basis of projections  in the
space $L^2 \left ([0,\pi], \mathbb{C}^2 \right ).$

\begin{theorem}
\label{thm0} (A)  If $v \in X^\pm,$ then there exists a Riesz basis
in $L^2 ([0,\pi],\mathbb{C}^2)$ which consists of root functions of
the operator $L_{Per^\pm}(v).$

(B)  If $v \not \in X^\pm, $ then the system of root functions of
the operator $L_{Per^\pm}(v)$ does not contain Riesz bases.
\end{theorem}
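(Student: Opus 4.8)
The plan is to reduce everything to Lemma~\ref{lemr} via the Riesz basis of projections $\{S_N, P_n\}$ established in \eqref{r13}. Since $\{P_n, |n|>N\}$ are two-dimensional Riesz projections and the finitely many eigenvalues inside $R_N$ contribute only a finite-dimensional piece (which can always be absorbed into any Riesz basis), the existence of a Riesz basis of root functions is equivalent to being able to pick unit root functions $f_n, g_n \in \operatorname{Ran} P_n$ for each large $n$ so that $\sup_n |\langle f_n, g_n\rangle| < 1$. Thus the entire theorem becomes a statement about the geometry of the eigenvectors of the $2\times 2$ matrix \eqref{p1} inside each $E_n^0$.

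For part (A), when $v \in X^\pm$ and $\mathcal{M}^\pm$ is infinite, I would use Lemma~\ref{lem1} to identify the root functions in $\operatorname{Ran} P_n$ with (the images under the Lyapunov--Schmidt correspondence of) the eigenvectors of the matrix $\left[\begin{smallmatrix}\alpha_n & \beta_n^-\\ \beta_n^+ & \alpha_n\end{smallmatrix}\right]$. The eigenvectors of such a matrix, evaluated at $z = z_n^\pm$, are proportional to $(\sqrt{\beta_n^-}, \pm\sqrt{\beta_n^+})$ up to normalization, so the angle between the two eigenvectors is controlled by the ratio $t_n = |\beta_n^+|/|\beta_n^-|$. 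The class condition \eqref{g1} together with Lemma~\ref{lem21} gives $1/(4c) \leq t_n \leq 4c$, which keeps this ratio bounded away from $0$ and $\infty$ uniformly; that is exactly what forces $|\langle f_n, g_n\rangle|$ to be bounded away from $1$. Combining with the free-system asymptotics (the projections $P_n$ are close to the orthogonal $P_n^0$ by \eqref{r12}, so the geometry in $\operatorname{Ran} P_n$ is a small perturbation of the orthogonal model), one obtains $\kappa = \sup_n |\langle f_n, g_n\rangle| < 1$, and Lemma~\ref{lemr} delivers the Riesz basis. When $\mathcal{M}^\pm$ is finite, all but finitely many $n$ have a genuine double eigenvalue, and one handles those blocks separately (choosing any orthonormal pair in the $2$-dimensional range when geometric multiplicity is $2$, which by Lemma~\ref{lem1}(b) happens precisely when the matrix is a scalar multiple of the identity).

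For part (B), the logic runs contrapositively: I must show that if $v \notin X^\pm$ then \emph{no} choice of root functions yields a Riesz basis. Failure of \eqref{g1} means there is a subsequence along which $t_n \to 0$ or $t_n \to \infty$, i.e.\ one of $|\beta_n^\pm(z_n^*)|$ is negligible compared to the other. In that regime the two eigenvectors of the $2\times 2$ matrix become nearly parallel, so $|\langle f_n, g_n\rangle| \to 1$ for every normalization of the root functions, violating the necessity direction of Lemma~\ref{lemr}; equivalently, the one-dimensional projections $P_n^1, P_n^2$ blow up. Some care is needed when the block is not diagonalizable (a Jordan block, where $\lambda_n^- = \lambda_n^+$ but geometric multiplicity is $1$): here there is only a single eigenfunction and a generalized eigenfunction, and one argues that these cannot be completed to a uniformly-bounded biorthogonal system.

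The main obstacle I expect is part (B), and specifically making the passage from ``$t_n$ unbounded or $\to 0$'' to ``$|\langle f_n, g_n\rangle| \to 1$ for \emph{every} choice of root functions'' fully rigorous. The subtlety is that the root functions are only determined up to the true perturbed geometry of $\operatorname{Ran} P_n$, not the idealized matrix \eqref{p1}, so I would need the quantitative comparison between the abstract matrix eigenvectors and the actual root functions — controlling the error via the derivative bounds \eqref{1.37} and the localization $z_n^* - z_n^\pm = O(\gamma_n)$ from Lemma~\ref{lem21} — to be uniform along the bad subsequence. This is the step where the lower bound in Proposition~\ref{prop1} and the relation between $\gamma_n$ and $|\beta_n^\pm|$ must be invoked carefully, and it is the natural analogue of the corresponding delicate estimate in \cite{DM25}.
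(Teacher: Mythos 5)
Your overall architecture is the same as the paper's: split off the finite-dimensional piece $Ran\,S_N$, use the Riesz basis of projections \eqref{r11}--\eqref{r13} together with Lemma~\ref{lemr}, replace $f(n),g(n)$ by $f^0(n)=P_n^0f(n)$, $g^0(n)=P_n^0g(n)$ via \eqref{r12}, and compute with eigenvectors of the $2\times 2$ matrix \eqref{p1}. However, part (A) of your argument has a genuine gap at its central step. It is \emph{not} true that the two-sided bound $\frac{1}{4c}\le|\eta_n|\le 4c$ on $\eta_n=\beta_n^-/\beta_n^+$ (from \eqref{g1} and Lemma~\ref{lem21}) by itself forces $|\langle f(n),g(n)\rangle|$ to stay away from $1$. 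The reason is that $f^0(n)$ and $g^0(n)$ are eigenvectors of the matrix \eqref{p1} evaluated at two \emph{different} points, namely the two roots $z_1(n)\neq z_2(n)$ of the basic equation \eqref{be} in $K_n$; with a fixed analytic branch of the square root one gets $f^0(n)\propto(\sqrt{\eta_n(z_1)},1)^T$ and $g^0(n)\propto(-\sqrt{\eta_n(z_2)},1)^T$, and hence (this is \eqref{a27}--\eqref{a33})
\[
|\langle f^0(n),g^0(n)\rangle|^2 \;\propto\; \frac{1+|\eta_n(z_1)||\eta_n(z_2)|-2\sqrt{|\eta_n(z_1)||\eta_n(z_2)|}\,\cos\psi_n}{\bigl(1+|\eta_n(z_1)|\bigr)\bigl(1+|\eta_n(z_2)|\bigr)},
\]
where $\psi_n$ is half the difference of the arguments of $\eta_n$ at $z_1,z_2$. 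If $\psi_n\to\pi$ while $|\eta_n(z_1)|=|\eta_n(z_2)|$, this expression equals $1$ identically, no matter where in $[\frac{1}{4c},4c]$ the moduli sit; and boundedness of $|\eta_n|$ on $K_n$ does not by itself prevent the argument of $\eta_n$ from rotating by an amount of order $1$ across a disc of radius $\gamma_n$. So the indispensable missing ingredient is the phase-coherence estimate \eqref{a31}, $\psi_n\to 0$: the paper obtains it by bounding $\left|\frac{d}{dz}\text{Log}\,\eta_n\right|\le \varepsilon_n\bigl(|\beta_n^-(z_n^*)|^{-1}+|\beta_n^+(z_n^*)|^{-1}\bigr)$ on the segment $[z_1,z_2]$ via \eqref{1.37} and \eqref{g2}, and then multiplying by $|z_1-z_2|=\gamma_n\le 2\bigl(|\beta_n^-(z_n^*)|+|\beta_n^+(z_n^*)|\bigr)$ (Lemma~\ref{lem11}); at that point \eqref{g1} enters a \emph{second} time, to kill the phase increment rather than to bound the modulus ratio. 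Your appeal to \eqref{r12} cannot replace this: $\|P_n-P_n^0\|\to 0$ compares the perturbed and free projections, but says nothing about the relative phase of $\beta_n^\pm$ at the two roots.

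Two smaller points. In part (B), your worry about Jordan blocks is moot: by the definition of $X^\pm$, failure of \eqref{g1} produces a bad subsequence inside $\mathcal{M}^\pm$, where $\lambda_n^-\neq\lambda_n^+$, so along it both eigenvalues are simple; the paper exploits exactly this to conclude that the unit eigenvectors $f(n_k),g(n_k)$ are determined up to unimodular factors, hence \emph{any} Riesz basis of root functions must contain them (up to such factors), and the necessity half of Lemma~\ref{lemr} applies. Moreover, no delicate phase analysis is needed there, since the uniform smallness \eqref{t5} of the ratio forces $f^0(n_k),g^0(n_k)\to(0,1)^T$ regardless of phases. Conversely, in part (A) the indices $n\notin\mathcal{M}^\pm$ (double eigenvalues, possibly infinitely many even when $\mathcal{M}^\pm$ is infinite) may have geometric multiplicity $1$, so ``choosing an orthonormal pair of eigenvectors'' is not always available; the correct choice, as in \eqref{a2}, is a unit eigenvector $f(n)$ together with any unit $g(n)\in Ran\,P_n$ orthogonal to it, which is automatically a root function because $(L-\lambda_n^*)^2$ vanishes on $Ran\,P_n$. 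In short, you located the main difficulty in part (B), but in this proof part (B) is the easier half; the genuinely delicate quantitative step is the phase estimate in part (A).
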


{\em Remark.}  To avoid any confusion, let us emphasize that in
Theorem~\ref{thm0} two {\em independent} theorems are stacked
together: one for the case of periodic boundary conditions $Per^+,$
and another one for the case of antiperiodic  boundary conditions
$Per^-.$

\begin{proof}
We consider only the case of periodic boundary conditions $bc=
Per^+$ since the proof is the same in the case of
 antiperiodic boundary conditions $bc=Per^-. $

(A)  Fix $v \in X^+, $   and let $N=N(v)>n_0 (v) $ be chosen so
large that Lemma~\ref{lem21},  Proposition~\ref{prop1} and
(\ref{r11})--(\ref{r13}) holds for $|n|>N.$

If $n \not \in \mathcal{M}^+ $   then
 $\; \lambda_n^*= n+z_n^*$  is a double eigenvalue.
 In this case we choose  $f(n),
g(n)\in Ran (P_n) $ so that
\begin{equation} \label{a2}
\|f(n)\|=\|g(n)\|=1, \quad L_{Per^+}(v) f(n) =\lambda_n^* f(n),
\quad
 \langle f(n), g(n) \rangle =0.
\end{equation}

If $n  \in \mathcal{M}^+ $   then $\lambda_n^- $ and $ \lambda_n^+$
are simple eigenvalues.  Now we choose corresponding eigenvectors
$f(n), g(n)\in Ran (P_n) $ so that
\begin{equation} \label{a2a}
\|f(n)\|=\|g(n)\|=1, \; \; L_{Per^+}(v) f(n) =\lambda_n^+ f(n), \;\;
L_{Per^+}(v) g(n) =\lambda_n^- g(n).
\end{equation}

Let $H $  be  the closed linear span of the system
$$\Phi =\{f(n), \, g(n): \; n \in \Gamma^+, \,\; |n|>N  \}.$$
By (\ref{r13}), $\; L^2 ([0,\pi], \mathbb{C}^2) = H\oplus Ran
(S_N).$ Since $\dim \, S_N < \infty,$
 the theorem will be proved if we show
that the system $\Phi $
 is a Riesz
basis in the space $H.$

By (\ref{r13}), the system of two-dimensional projections $\{P_n: \;
n \in \Gamma^+, \; |n|>N  \}$ is Riesz basis of projections in $H.$
By  Lemma~\ref{lemr},  the system $\Phi $ is a Riesz basis in $H$ if
and only if
$$
\sup_{n\in \Gamma^+, |n|>N} |\langle f(n), g(n) \rangle  | < 1.
$$
By (\ref{a2}), we need to consider only indices $n\in
\mathcal{M}^+.$
 Next we show that
\begin{equation} \label{a3}
\sup_{\mathcal{M}^+} |\langle f(n), g(n) \rangle  | < 1.
\end{equation}

By Lemma \ref{lem21} the quotient $\eta_n (z)= \beta^-_n
(z)/\beta^+_n (z) $ is a well defined analytic function on a
neighborhood of the disc $K_n=\{z: \;|z-z^*_n| \leq \gamma_n \}.$
Moreover, in view of (\ref{g1}) and (\ref{g2}), we have
\begin{equation} \label{a4}
\frac{1}{4c}  \leq |\eta_n (z)| \leq 4c \quad \text{for} \quad n\in
\mathcal{M}^+, \; z \in K_n.
\end{equation}
Since $\eta_n (z)$ does not vanish in $K_n ,$ there is an
appropriate branch $\text{Log}$ of $\log z $ (which depend on $n$)
defined on a neighborhood of $\eta_n (K_n). $  We set
$$
\text{Log} \, (\eta_n (z)) = \log |\eta_n (z)| + i \varphi_n (z);
$$
then
\begin{equation}
\label{a14} \eta_n (z)= \beta^-_n (z)/\beta^+_n (z) =|\eta_n (z)|
e^{i \varphi_n (z)},
\end{equation}
so the square root $\sqrt{\beta^-_n (z)/\beta^+_n (z)}$ is a well
defined analytic function on a neighborhood of $K_n$ by
\begin{equation}
\label{a15} \sqrt{\beta^-_n (z)/\beta^+_n (z)}= \sqrt{|\eta_n
(z)|}e^{\frac{i}{2} \varphi_n (z)}.
\end{equation}

Now the basic equation (\ref{be}) splits into the following two
equations
\begin{eqnarray}
\label{a17} z=\zeta_n^+ (z):=  \alpha_n (z) + \beta^+_n (z)
 \sqrt{\beta^-_n (z)/\beta^+_n (z)}, \\
\label{a18} z=\zeta_n^- (z):=  \alpha_n (z) - \beta^+_n (z)
\sqrt{\beta^-_n (z)/\beta^+_n (z)}.
\end{eqnarray}
 For large enough $|n|,$ each of the equations (\ref{a17})
and (\ref{a18}) has exactly one root in the disc $K_n.$  Indeed, in
view of (\ref{1.37}),
$$
\sup_{|z|\leq 1/2} \left | d \zeta_n^{\pm}/dz \right | \to 0 \quad
\text{as} \quad n \to \infty.
$$
Therefore, for large enough $|n|$ each of the functions $\zeta_n^\pm
$ is a contraction on the disc $K_n, $  which implies that each of
the equations (\ref{a17}) and (\ref{a18}) has at most one root in
the disc $K_n.$ On the other hand,  Lemma~\ref{loc} implies that for
large enough $|n|$ the basic equation (\ref{be}) has exactly two
simple roots in $K_n, $ so each of the equations (\ref{a17}) and
(\ref{a18}) has exactly one root in the disc $K_n.$

For large enough $|n|,$ let $z_1 (n) $ (respectively $z_2 (n) $) be
the only root of the equation (\ref{a17}) (respectively (\ref{a18}))
in the  disc $K_n.$ Of course, we have
 $$ \text{either} \;\;  (i) \;\; z_1 (n) = \lambda_n^-
-n,  \; z_2 (n) = \lambda_n^+ -n  \quad   \text{or} \; \; (ii) \;\;
z_1 (n) = \lambda_n^+ -n, \; z_2 (n) = \lambda_n^- -n.$$ Further we
assume that (i) takes place; the case (ii) may be treated in the
same way, and in both cases we have
\begin{equation}
\label{a24} |z_1 (n) - z_2 (n)| = \gamma_n = |\lambda_n^+
 -  \lambda_n^-|.
\end{equation}

We set
\begin{equation}
\label{c24} f^0(n)= P_n^0 f(n), \quad  g^0(n)= P_n^0 g(n).
\end{equation}
From (\ref{r12}) it follows that $\|P_n - P_n^0\| \to 0.$ Therefore,
$$
\|f(n) - f^0(n) \| = \|(P_n - P_n^0)f(n) \| \leq \|P_n - P_n^0\| \to
0, \quad \|g(n) -g^0 (n)\| \to 0,
$$
so $ |\langle f(n)-f^0(n), g(n)-g^0 (n) \rangle| \to 0.$  Since $
\|f (n)\|^2 =\|f^0 (n)\|^2+ \|f(n) - f^0(n) \|^2 $ and $ \langle
f(n), g(n) \rangle = \langle f^0(n), g^0(n) \rangle + \langle
f(n)-f^0(n), g(n)-g^0 (n) \rangle, $  we obtain
\begin{equation}
\label{a23} \|f^0(n)\|, \,\|g^0(n)\|\to 1,  \quad \limsup_{n\to
\infty} |\langle f(n), g(n) \rangle |=\limsup_{n\to \infty} |\langle
f^0(n), g^0(n) \rangle |.
\end{equation}

By Lemma \ref{lem1},   $f^0 (n)$ is an eigenvector of the matrix
$\begin{pmatrix} \alpha_n (z_1) & \beta_n^- (z_1)
\\ \beta_n^+ (z_1) & \alpha_n (z_1)
\end{pmatrix}$ corresponding to its eigenvalue $z_1=z_1 (n), $ i.e.,
$$ \begin{pmatrix} \alpha_n (z_1)-z_1 &  \beta_n^- (z_1) \\ \beta_n^+ (z_1) &
\alpha_n (z_1)-z_1  \end{pmatrix} f^0 (n) = 0. $$ Therefore, $f^0(n)
$ is proportional to the vector $\left (\frac{z_1 - \alpha_n
(z_1)}{\beta_n^+ (z_1)}, \,1 \right )^T.$ Taking into account
(\ref{a14}), (\ref{a15}) and (\ref{a17}) we obtain
\begin{equation}
\label{a25} f^0 (n) = \frac{\|f^0 (n)\|} {\sqrt{1+ |\eta_n (z_1)|}}
\begin{pmatrix}  \sqrt{|\eta_n (z_1)|}e^{\frac{i}{2}\varphi(z_1)}\\1
\end{pmatrix}.
\end{equation}
In an analogous way, from (\ref{a14}), (\ref{a15}) and (\ref{a18})
it follows
\begin{equation}
\label{a26} g^0 (n) = \frac{\|g^0 (n)\|} {\sqrt{1+ |\eta_n (z_2)|}}
\begin{pmatrix}  -\sqrt{|\eta_n (z_2)|}e^{\frac{i}{2}\varphi(z_2)}\\1
\end{pmatrix}.
\end{equation}
Now,  (\ref{a25}) and (\ref{a26}) imply
\begin{equation}
\label{a27} \langle f^0 (n), g^0 (n) \rangle  =\|f^0 (n)\|\|g^0
(n)\| \frac {1-\sqrt{|\eta_n (z_1)|}\sqrt{ |\eta_n (z_2)|} \,
e^{i\psi_n }} {\sqrt{1+ |\eta_n (z_1)|}\sqrt{1+ |\eta_n (z_2)|}},
\end{equation}
where
$$\psi_n = \frac{1}{2}[\varphi_n (z_1(n)) -\varphi_n (z_2(n)].
$$

Next we explain that
\begin{equation}
\label{a31} \psi_n \to 0 \quad \text{as} \;\; n \to \infty.
\end{equation}
Since $\varphi_n = Im \, \left ( \text{Log}
 \,\eta_n  \right )  $ we obtain, taking into account (\ref{a24}),
 that
$$
|\varphi_n (z_1(n)) -\varphi_n (z_2(n)| \leq \sup_{[z_1,z_2]} \left
| \frac{d}{dz} \left (\text{Log} \,\eta_n \right )  \right | \cdot
\gamma_n,
$$
where $[z_1, z_2]$ denotes the segment with end points $z_1 = z_1
(n)$ and $z_2 = z_2 (n).$

By (\ref{1.37}) in Proposition \ref{bprop} and (\ref{g2}) in Lemma
\ref{lem21} we estimate
$$
\frac{d}{dz} \left (\text{Log} \,\eta_n \right )= \frac{1}{\beta^-_n
(z)} \frac{d\beta^-_n}{dz} (z) -\frac{1}{\beta^+_n (z)}
\frac{d\beta^+_n}{dz} (z), \quad z \in [z_1,z_2],
$$
as follows:
$$
\left | \frac{d}{dz} \left (\text{Log} \,\eta_n \right )  \right |
\leq \frac{\varepsilon_n}{|\beta^-_n (z_n^*)|}
+\frac{\varepsilon_n}{|\beta^+_n (z_n^*)|}
$$
where $\varepsilon_n= C \left ( \mathcal{E}_{|n|} (r)
+\frac{1}{\sqrt{|n|}} \right )  \to 0 \quad \text{as}\; n \to
\infty.$ Therefore,  (\ref{g1}) and (\ref{g4}) imply that $
|\varphi_n (z_1(n)) -\varphi_n (z_2(n)| \leq 4(1+c)\cdot
\varepsilon_n \to 0, $ i.e., (\ref{a31}) holds.

From (\ref{a27}) it follows
\begin{equation}
\label{a32} |\langle f^0 (n), g^0 (n) \rangle |^2=\|f^0 (n)\|^2
\|g^0 (n)\|^2 \cdot \Pi_n ,
\end{equation}
with
\begin{equation}
\label{a33} \Pi_n = \frac{1+|\eta_n (z_1)| |\eta_n
(z_2)|-2\sqrt{|\eta_n (z_1)||\eta_n (z_2)|} \cos \psi_n}{\left (1+
|\eta_n (z_1)| \right )\left (1+ |\eta_n (z_2)| \right )}.
\end{equation}

 Now  (\ref{a31}) implies  $\cos \psi_n
>0 $ for large enough $n,$  so taking into account that $\|f^0 (n)\|,
\|g^0 (n)\|\leq 1, $ we obtain by (\ref{a4})
$$ |\langle f^0 (n), g^0 (n) \rangle
|^2 \leq \Pi_n \leq
 \frac{1+|\eta_n (z_1)|
|\eta_n (z_2)| }{\left (1+ |\eta_n (z_1)|
 \right ) \left (1+ |\eta_n (z_2)|
 \right )} \leq \delta < 1
$$
with
$$
\delta= \sup \left \{\frac{1+xy}{(1+x)(1+y)}: \; \frac{1}{4c} \leq
x,y \leq 4c \right \}.
$$
 Finally, (\ref{a23}) shows that (\ref{a3}) holds,
 which completes the proof of (A).
 \bigskip

 (B)  For every Dirac potential $v$ we set
  \begin{equation}
\label{t1} t_n (z) = \begin{cases}  |\beta^-_n (z)/\beta^+_n (z)|  &
\text{if} \quad \beta^+_n (z) \neq 0,\\
\infty &  \text{if} \quad \beta^+_n (z) = 0, \;\beta^-_n (z) \neq
0,\\
1 &  \text{if} \quad \beta^+_n (z) = 0, \;\beta^-_n (z)=0;
\end{cases}
\end{equation}
then $t_n (z), \; |z|<1, $  is well-defined for large enough $|n|.$

If $v \not \in X^+,$ then there is a subsequence of indices $(n_k) $
in $\mathcal {M}^+$ such that one of the following holds:
\begin{eqnarray}
\label{t2}  t_{n_k} (z_{n_k}^*) \to 0 \quad \text{as} \;\; k \to \infty,\\
\label{t3} t_{n_k} (z_{n_k}^*) \to \infty \quad \text{as} \;\; k \to
\infty.
\end{eqnarray}
Next we consider only the case (\ref{t2}) because the case
(\ref{t3}) could be handled in a similar way  --  if $1/t_{n_k}
(z_{n_k}^*) \to 0, $ then one may exchange the roles of $\beta_n^+$
and $\beta_n^-$ and use the same argument.

 In the above notations, if (\ref{t2}) holds then
there is a sequence $(\tau_k)$ of positive numbers such that
\begin{equation}
\label{t5} t_{n_k} (z) \leq  \tau_k \to 0 \quad  \forall \, z \in
[z_{n_k}^-,z_{n_k}^+],
\end{equation}
where $[z_n^-,z_n^+]$ denotes the segment with end points $z_n^- $
and $z_n^+.$

Indeed, Lemma \ref{lem11} and (\ref{t2}) imply that for large enough
$k$
\begin{equation}
\label{7.23} |\gamma_{n_k}| \leq 2 (|\beta_{n_k}^- (z_{n_k}^*)| +
|\beta_{n_k}^+ (z_{n_k}^*)|) \leq 4 |\beta_{n_k}^+ (z_{n_k}^*)|.
\end{equation}

In view of (\ref{1.37}) in Proposition \ref{bprop}, for  $z \in
[z_n^-,z_n^+]$ and $n \in\mathcal{M}$ with large enough $|n|$ we
have
\begin{equation}
\label{7.24} |\beta_n^\pm (z) - \beta_n^\pm (z_n^*)| \leq
\sup_{[z_n^-,z_n^+]} \left |\frac{\partial \beta_n^\pm }{\partial z}
(z) \right | \cdot |z-z_n^*| \leq \varepsilon_n |\gamma_n|,
\end{equation}
with $\varepsilon_n  \to 0 $  as $|n|\to \infty.$ Therefore, from
(\ref{7.23}) and (\ref{7.24}) it follows that
\begin{equation}
\label{7.26} |\beta_{n_k}^+ (z)| \geq |\beta_{n_k}^+ (z_{n_k}^*)| -
4 \varepsilon_{n_k} |\beta_{n_k}^+ (z_{n_k}^*) | = (1-4
\varepsilon_{n_k}) |\beta_{n_k}^+  (z_{n_k}^*)|.
\end{equation}

On the other hand, (\ref{7.23}) and (\ref{7.24}) imply that
$$
|\beta_{n_k}^- (z)| \leq |\beta_{n_k}^- (z) - \beta_{n_k}^-
(z_{n_k}^*)| + |\beta_{n_k}^- (z_{n_k}^*)| \leq 4\varepsilon_{n_k}
|\beta_{n_k}^+  (z_{n_k}^*)| + |\beta_{n_k}^- (z_{n_k}^*)|.
$$
Thus, since $\varepsilon_{n_k} \to 0,$ we obtain
$$
\frac{|\beta_{n_k}^-  (z)|}{|\beta_{n_k}^+ (z)|} \leq
\frac{4\varepsilon_{n_k} |\beta_{n_k}^+  (z_{n_k}^*)| +
|\beta_{n_k}^- (z_{n_k}^*)|}{(1-4\varepsilon_{n_k})|\beta_{n_k}^+
(z_{n_k}^*)|} = \frac{4\varepsilon_{n_k} +
t_{n_k}(z_{n_k}^*)}{1-4\varepsilon_{n_k}} \to 0,
$$
i. e., (\ref{t5}) holds with $\tau_k =\frac{4\varepsilon_{n_k} +
t_{n_k}(z_{n_k}^*)}{1-4\varepsilon_{n_k}}.  $

Let the vectors $f(n_k), \, g(n_k) \in Ran (P_{n_k}) $ be chosen as
in (\ref{a2a}).  Then $f(n_k) $ and $g(n_k)$ are unit eigenvectors
which corresponds to the simple eigenvalues $\lambda^+_{n_k} $ and
$\lambda^-_{n_k}, $ so they are uniquely determined up to constant
multipliers of absolute value one. Therefore, if the system of root
functions of $L_{Per^+}(v)$ contains Riesz bases, then the system
$\{f(n_k), \, g(n_k): k\in \mathbb{N}\}$ has to be a Riesz basis in
its closed linear span which coincides with the closed linear span
of  $\{Ran \, P_{n_k}, \; k \in \mathbb{N}\}.$ By Lemma~\ref{lemr}
and (\ref{a23}), this would imply
\begin{equation}
\label{t10} \sup_k  \langle  f(n_k),  g(n_k)  \rangle =\sup_k
\langle  f^0 (n_k),  g^0 (n_k)  \rangle <1.
\end{equation}
Thus, the proof of (B) will be completed if we show that (\ref{t10})
fails.

By Lemma \ref{lem1},   $f^0 (n_k)$ is an eigenvector of the matrix
$\begin{pmatrix} \alpha_{n_k} (z^+_{n_k}) & \beta_{n_k}^-
(z^+_{n_k})
\\ \beta_{n_k}^+ (z^+_{n_k}) & \alpha_{n_k} (z^+_{n_k})
\end{pmatrix}$
corresponding to its eigenvalue $z^+_{n_k}, $ so it follows that
$f^0(n) $ is proportional to the vector $\begin{pmatrix}  a(k) \\
1  \end{pmatrix} $ with  $a(k)=\frac{z^+_{n_k} - \alpha_{n_k}
(z^+_{n_k})}
 {\beta_{n_k}^+ (z^+_{n_k})}.$  Moreover,
from (\ref{be}), (\ref{t1}) and (\ref{t5}) it
 follows that
$$
|a(k)| = \sqrt{t_{n_k}(z^+_{n_k})} \leq \sqrt{\tau_k} \to 0 \quad
\text{as} \;\; k \to \infty.
$$
Therefore, we obtain
\begin{equation}
\label{t31} f^0(n_k) = \frac{\|f^0(n_k)\|}{\sqrt{|a(k)|^2+1} }
\begin{pmatrix}  a(k) \\   1  \end{pmatrix} \to
\begin{pmatrix}  0 \\   1  \end{pmatrix}
\quad \text{as} \;\; k \to \infty.
\end{equation}
In the same we obtain that $g^0(n_k) \to
\begin{pmatrix}  0 \\   1  \end{pmatrix}$
as $ k \to \infty.  $ Hence,  $\langle  f^0(n_k),g^0(n_k) \rangle
\to 1$ as  $ k \to \infty,  $  so (\ref{t10} fails, which completes
the proof.

\end{proof}

By Theorem \ref{thm0},  the condition (\ref{g1})  guarantees that
there exists a Riesz basis in $L^2 ([0,\pi], \mathbb{C}^2)$ which
consists of root functions of the operator $L_{Per^\pm} (v).$
Besides the case $v \in X_t $ (see the next section for a definition
of the class of potentials $X_t$)  it seems difficult to verify the
condition (\ref{g1}). Moreover, since the points $z_n^*$ are not
known in advance, in order to check (\ref{g1}) one has to compare
the values of  $\beta^\pm_n (z) $ for all $z$ close to 0.  Next we
give a modification of Theorem~\ref{thm0}, which is more suitable
for applications.

Consider potentials $v$ such that for $n \in \Gamma^+ =2\mathbb{Z}$
(or $n \in \Gamma^- =2\mathbb{Z}+1$) with large enough $|n|$
\begin{equation}
\label{c2}
 \beta_n^- (0)\neq 0, \quad \beta_n^+ (0)\neq 0
 \end{equation}
and
\begin{equation}
\label{c3} \exists d>0 : \;\;  d^{-1}|\beta_n^\pm (0)| \leq
|\beta_n^\pm (z)| \leq d \, |\beta_n^\pm (0)| \quad   \forall z\in
D=\{z: |z| < 1/4 \}.
\end{equation}

\begin{theorem}
\label{thm00}  Suppose $bc =Per^+ $ (or $bc=Per^-$), and $v$ is a
Dirac potential such that (\ref{c2}) and (\ref{c3}) hold for $n \in
\Gamma^+ $  (respectively $n \in \Gamma^- $). Then

 (a) the system of root functions of $L_{Per^+} (v)$
 (respectively $L_{Per^-} (v)$) is
  complete and contains at most finitely many
  linearly independent associated functions;

   (b)   the system of root functions of $L_{Per^+} (v)$
 (respectively $L_{Per^-} (v)$) contains Riesz bases
   if and only if
\begin{equation}
\label{c4} 0 < \liminf_{n \in \Gamma^+ }
 \frac{|\beta_n^- (0)|}{|\beta_n^+ (0)|},
\quad  \limsup_{n \in \Gamma^+ }  \frac{|\beta_n^- (0)|}{|\beta_n^+
(0)|}<\infty
\end{equation}
(or, respectively, $\liminf$ and  $\limsup$ are taken over
$\Gamma^-). $
\end{theorem}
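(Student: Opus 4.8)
The plan is to deduce both parts from Theorem~\ref{thm0} by analyzing how the uniform hypotheses (\ref{c2})--(\ref{c3}) constrain the spectrum; I treat $bc = Per^+$, the antiperiodic case being identical. Two facts drive everything: first, (\ref{c2})--(\ref{c3}) force all large $|n|$ to lie in $\mathcal{M}^+$ with simple eigenvalues, which yields (a) and shows $\mathcal{M}^+$ is infinite; second, (\ref{c3}) lets me replace the inaccessible point $z_n^*$ by $0$ in the definition of $X^+$, so that $v\in X^+$ becomes equivalent to (\ref{c4}), which yields (b).

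For part (a), note first that (\ref{c2})--(\ref{c3}) give $\beta_n^+(z)\beta_n^-(z)\neq 0$ for all $z\in D$ and large $|n|$; by Lemma~\ref{lem1}(b) an eigenvalue of geometric multiplicity $2$ would require $\beta_n^+(z^*)=\beta_n^-(z^*)=0$, so every eigenvalue has geometric multiplicity $1$. It remains to exclude double eigenvalues for large $|n|$. Writing the basic equation (\ref{be}) as $F_n(z):=(z-\alpha_n(z))^2-\beta_n^+(z)\beta_n^-(z)=0$ and using $|\alpha_n|,|\beta_n^\pm|\le\varepsilon_n\to 0$ from (\ref{1.36}), every root satisfies $|z|\le 2\varepsilon_n$, so a hypothetical double root $z^*$ lies near $0$. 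The key estimate is a \emph{relative} derivative bound: since (\ref{c3}) bounds $|\beta_n^\pm|$ by $d\,|\beta_n^\pm(0)|$ on all of $D$, Cauchy's inequality on a small disc about $z^*$ gives $|\partial_z\beta_n^\pm(z^*)|\le C_d|\beta_n^\pm(0)|\le C_d d\,|\beta_n^\pm(z^*)|$. Then $F_n(z^*)=0$ forces $|z^*-\alpha_n(z^*)|=\sqrt{|\beta_n^+(z^*)||\beta_n^-(z^*)|}$, while $F_n'(z^*)=0$ combined with (\ref{1.37}) and the relative bound yields
$$ \sqrt{|\beta_n^+(z^*)||\beta_n^-(z^*)|}\,(1-C\varepsilon_n)\le C_d d\,|\beta_n^+(z^*)||\beta_n^-(z^*)|, $$
i.e.\ $1-C\varepsilon_n\le C_d d\sqrt{|\beta_n^+(z^*)||\beta_n^-(z^*)|}\le C_d d\,\varepsilon_n\to 0$, a contradiction for large $|n|$. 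Hence $\lambda_n^-\neq\lambda_n^+$ for large $|n|$, so all but finitely many $n$ belong to $\mathcal{M}^+$ and carry two simple eigenvalues; associated functions can only arise from the finite-dimensional range of $S_N$, and completeness is immediate from the unconditionally convergent decomposition (\ref{r13}).

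For part (b), since $z_n^*\in D$ (as $|z_n^*|<1/4$), (\ref{c3}) gives $d^{-1}|\beta_n^\pm(0)|\le|\beta_n^\pm(z_n^*)|\le d\,|\beta_n^\pm(0)|$, whence
$$ d^{-2}\,\frac{|\beta_n^-(0)|}{|\beta_n^+(0)|}\le\frac{|\beta_n^-(z_n^*)|}{|\beta_n^+(z_n^*)|}\le d^{2}\,\frac{|\beta_n^-(0)|}{|\beta_n^+(0)|}. $$
The defining condition (\ref{g1}) of $X^+$ is exactly a two-sided bound on $|\beta_n^-(z_n^*)|/|\beta_n^+(z_n^*)|$ over $\mathcal{M}^+$; by the displayed inequalities this holds if and only if $|\beta_n^-(0)|/|\beta_n^+(0)|$ stays bounded away from $0$ and $\infty$, i.e.\ if and only if (\ref{c4}) holds. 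As part (a) makes $\mathcal{M}^+$ infinite, Theorem~\ref{thm0} applies verbatim: $v\in X^+$ produces a Riesz basis of root functions and $v\notin X^+$ forbids one. Combining, the system of root functions contains a Riesz basis if and only if (\ref{c4}) holds.

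The step I expect to be the main obstacle is the exclusion of double eigenvalues in part (a). The naive estimate $|\partial_z\beta_n^\pm|\le C\varepsilon_n$ from (\ref{1.37}) is not enough, because when the ratio $|\beta_n^-|/|\beta_n^+|$ degenerates the quantity $\partial_z\sqrt{\beta_n^+\beta_n^-}$ need not be small; the remedy is precisely the \emph{relative} Cauchy bound above, and it is here that hypothesis (\ref{c3}) over the whole disc $D$—rather than merely at the single point $z_n^*$—is indispensable.
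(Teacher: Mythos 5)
Your proposal is correct, and its skeleton is the same as the paper's: completeness comes from the unconditional decomposition (\ref{r13}), eigenvalues of multiplicity two are excluded by pairing the basic equation with its $z$-derivative (the paper's system (\ref{c5})--(\ref{c6})), and part (b) is reduced to Theorem~\ref{thm0} by using (\ref{c3}) to transfer the two-sided condition (\ref{g1}) from the unknown points $z_n^*$ to the point $0$, part (a) guaranteeing that all large $|n|$ lie in $\mathcal{M}^\pm$ so that (\ref{g1}) becomes equivalent to (\ref{c4}). The genuine divergence is exactly the step you flagged as the crux. The paper's proof of (a) uses only the absolute derivative bound (\ref{c7}) and arrives at
$$
2(1-\varepsilon_n) \le \varepsilon_n\left( \left|\frac{\beta_n^+(z^*)}{\beta_n^-(z^*)}\right|^{1/2} + \left|\frac{\beta_n^-(z^*)}{\beta_n^+(z^*)}\right|^{1/2}\right),
$$
and then asserts, ``in view of (\ref{c3})'', that the right-hand side is at most $2d\varepsilon_n$. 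That last bound is not a consequence of (\ref{c2})--(\ref{c3}): condition (\ref{c3}) constrains each of $|\beta_n^+|$, $|\beta_n^-|$ separately across $D$ but places no constraint on their ratio at $z^*$ (constants $\beta_n^+\equiv\varepsilon_n^{10}$, $\beta_n^-\equiv\varepsilon_n^{1/2}$ satisfy (\ref{c2})--(\ref{c3}) with $d=1$ while the ratio sum is unbounded); boundedness of that ratio is essentially hypothesis (\ref{c4}), which is not assumed in (a) and cannot be invoked there, since (a) is precisely what legitimizes the ``only if'' half of (b). Your relative Cauchy bound $|\partial_z\beta_n^\pm(z^*)|\le Cd^2|\beta_n^\pm(z^*)|$ --- valid because a double root must lie within $O(\varepsilon_n)$ of $0$, hence well inside the disc on which (\ref{c3}) holds --- replaces the right-hand side by $Cd^2|\beta_n^+(z^*)\beta_n^-(z^*)|$ and yields the ratio-free contradiction $1-C\varepsilon_n\le Cd^2\sqrt{|\beta_n^+(z^*)\beta_n^-(z^*)|}\to 0$. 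So your treatment of (a) is not merely a variant: it supplies the justification that the paper's own write-up of this step lacks, and it is what makes the equivalence in (b) genuinely a two-sided statement under (\ref{c2})--(\ref{c3}) alone. Two small points to tighten: the identification ``algebraic multiplicity two $\Leftrightarrow$ double root of $F_n$'' should be cited from the paper (it follows from Proposition~\ref{bprop}(c), as used to derive (\ref{c5})--(\ref{c6})), and in (b) you should note explicitly that the finitely many $n\in\Gamma^\pm\setminus\mathcal{M}^\pm$ do not affect the $\liminf$/$\limsup$ in (\ref{c4}).
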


{\em Remark.}    Although the conditions (\ref{c2})--(\ref{c4}) look
too technical there is -- after \cite{DM15, DM10}  -- a well
elaborated technique to evaluate these parameters and check these
conditions. To compare with the case of Hill operators with
trigonometric polynomial coefficients -- see \cite{DM25a,DM25}.

\begin{proof}
By Proposition \ref{bprop}, for large enough $|n|$ the basic
equation
\begin{equation}
\label{c5} (z-\alpha_n (z))^2 = \beta^+_n (z) \beta^-_n (z),
\end{equation}
has exactly two roots (counted with multiplicity) in the disc
$D=\{z: |z|< 1/4\}. $ Therefore, a number $\lambda = n +z $ with $ z
\in D $ is a periodic or antiperiodic eigenvalue of algebraic
multiplicity two if and only if $z \in D$ satisfies the system of
two equations (\ref{c5}) and
\begin{equation}
\label{c6} 2(z-\alpha_n (z)) \frac{d}{dz} \left (z - \alpha_n (z)
\right ) =\frac{d}{dz} \left ( \beta^+_n (z) \beta^-_n (z)\right ).
\end{equation}

In view of \cite[Theorem 9]{DM20}, the system of root functions of
the operator $L_{Per^\pm} (v)$ is complete, so Part (a) of the
theorem will be proved if we show that there are at most finitely
many $n$  such that the system (\ref{c5}), (\ref{c6}) has a solution
$z \in D.$

Suppose that $z^* \in D $ satisfies (\ref{c5}) and (\ref{c6}). By
(\ref{1.37}),  for each $z \in D $
\begin{equation}
\label{c7} \left |\frac{d\alpha_n}{dz}  (z) \right | \leq
\varepsilon_n, \quad  \left |\frac{d\beta^\pm_n}{dz}  (z)\right |
\leq \varepsilon_n \quad \text{with} \;\; \varepsilon_n \to 0
\;\;\text{as} \;\; |n| \to \infty.
\end{equation}
In view of (\ref{c7}), the equation (\ref{c6}) implies
$$
2\left | z^*-\alpha_n (z^*) \right | (1- \varepsilon_n) \leq
\varepsilon_n  \left (|\beta^+_n (z^*)|+|\beta^-_n (z^*)|   \right
).
$$
By (\ref{c5}),
$$
\left | z^*-\alpha_n (z^*) \right |= |\beta_n^+ (z^*) \beta_n^-
(z^*)|^{1/2},
$$
so it follows, in view of (\ref{c3}),
$$
2(1- \varepsilon_n)\leq \varepsilon_n \left ( \left |
\frac{\beta^+_n (z^*)}{\beta^-_n (z^*)} \right |^{1/2}  +\left |
\frac{\beta^-_n (z^*)}{\beta^+_n (z^*)} \right |^{1/2}  \right )
\leq 2 d \varepsilon_n.
$$
Since $\varepsilon_n \to 0 $ as $|n| \to \infty, $  the latter
inequality holds for at most finitely many $n,$ which completes the
proof of (a).

 In view of (a), all but finitely many of the eigenvalues
of $L_{Per^\pm}$ are simple, i.e., $\lambda_n^- \neq \lambda_n^+$
for large enough $|n|.$ One can easily see that Conditions
(\ref{c2})--(\ref{c4}) imply (\ref{g1}), respectively for $n \in
\Gamma^+ $ or $n \in \Gamma^-, $ i.e., $v \in  X^+$  or $v \in X^-.$
Hence (b) follows from Theorem~\ref{thm0}.
\bigskip

{\em Remark.}  For Hill-Schr\"odinger operators with
$L^2$-potentials, an analog of Theorem~\ref{thm00} has been proven
in \cite[Theorem 1]{DM25} (see also \cite[Theorem 2]{DM25a}).

Theorem~\ref{thm0} gives a criterion for existence of Riesz basis
consisting of root functions in the case of Dirac operators
 $L_{Per^\pm} (v) $ with $L^2 $-potentials.
Technically its proof is based on the same argument as in
\cite[Theorem 1]{DM25}. Moreover,  analogs of Theorem~\ref{thm0} and
\ref{thm00} hold for Hill-Schr\"odinger operators with
$H^{-1}$-potentials hold and the proofs are essentially the same.

\end{proof}

\section{Applications}

Consider the classes of Dirac potentials
\begin{equation}
\label{4.2}  X_t =\left \{v=
\begin{pmatrix} 0 & P
\\ Q & 0 \end{pmatrix}, \quad Q (x) = t \overline{P(x)}, \;
 P,Q \in L^2 ([0,\pi]) \right \}, \; t \in \mathbb{R} \setminus \{0\}.
\end{equation}
If $t=1$ we get the class $X_1$ of symmetric Dirac potentials (which
generate self-adjoint Dirac operators);  $X_{-1} $ is the class of
skew-symmetric Dirac potentials.  Next we show that if $v \in X_t $
then the system of root functions of $L_{Per^+}(v) $ or
$L_{Per^-}(v) $ contains Riesz bases.

\begin{proposition}
\label{prop3} Suppose $v \in X_t, \;  t \in \mathbb{R} \setminus
\{0\}.$

 (a)  If $t>0, $ then
there is a symmetric potential $\tilde{v} $ such that $L_{Per^\pm}
(v)$ is similar to the self-adjoint operator $L_{Per^\pm}
(\tilde{v}), $
 so  its spectrum $ Sp  \left
(L_{Per^\pm} (v) \right ) \subset \mathbb{R}. $

(b)  If  $t<0, $ then there is a skew-symmetric potential $\tilde{v}
$ such that $L_{Per^\pm} (v)$ is similar to $L_{Per^\pm}
(\tilde{v}). $
 Moreover, there is an $N=N(v) $ such
that for $|n|>N$ either

(i) $\lambda_n^-$ and $\lambda_n^+$ are simple eigenvalues and
$\overline{\lambda_n^+} =\lambda_n^-, \;  Im \, \lambda^\pm_n \neq
0$

or

 (ii) $\lambda_n^+ =\lambda_n^-$ is a real eigenvalue of
algebraic and geometric multiplicity 2.

(c)  For large enough $|n|$
\begin{equation}
\label{4.25} \overline{\beta^+_n (z_n^*, v)}=t \cdot \beta^-_n
(z_n^*, v),
\end{equation}
which implies $ X_t \subset X^+ \cup X^-. $

(d)   The system of root functions of $L_{Per^+}(v) $ (or
$L_{Per^-}(v) $) contains Riesz bases.

\end{proposition}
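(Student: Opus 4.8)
The plan is to dispose of (a) and (b) by one and the same constant \emph{diagonal gauge transformation}, to extract (c) from a term-by-term manipulation of the series (\ref{beta}), and to read off (d) directly from Theorem~\ref{thm0}. First I would introduce $U=\mathrm{diag}(1,s)$ with a real $s>0$ to be fixed. Since $U$ commutes with both $J=\mathrm{diag}(1,-1)$ and $d/dx$, conjugation only acts on the potential: $U^{-1}L_{Per^\pm}(v)U=L_{Per^\pm}(\tilde v)$, where $\tilde v=U^{-1}vU$ has off-diagonal entries $\tilde P=sP$ and $\tilde Q=s^{-1}Q=s^{-1}t\overline P$. Being constant, $U$ maps the domain $\{y(\pi)=\pm y(0)\}$ onto itself, so the two operators are genuinely similar, hence isospectral with identical algebraic and geometric multiplicities. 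For $t>0$ the choice $s=\sqrt t$ gives $\tilde Q=\sqrt t\,\overline P=\overline{\tilde P}$, so $\tilde v$ is symmetric, $L_{Per^\pm}(\tilde v)$ is self-adjoint by (\ref{adj}), and $Sp(L_{Per^\pm}(v))=Sp(L_{Per^\pm}(\tilde v))\subset\mathbb R$; this is (a). For $t<0$ the choice $s=\sqrt{-t}$ gives $\tilde Q=-\sqrt{-t}\,\overline P=-\overline{\tilde P}$, so $\tilde v$ is skew-symmetric, which proves the similarity assertion in (b).

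To pin down the eigenvalue structure in (b) I would exploit an anti-linear symmetry of the skew-symmetric operator. Put $\sigma=\begin{pmatrix}0&1\\-1&0\end{pmatrix}$ and define the anti-linear map $Cy=\sigma\overline y$. Using $\overline{\tilde P}=-\tilde Q$ and $\overline{\tilde Q}=-\tilde P$ one checks $\sigma\overline{\tilde v}=\tilde v\sigma$ and $\sigma J=-J\sigma$, whence $C$ commutes with $L_{Per^\pm}(\tilde v)$; moreover $C$ preserves the boundary conditions and $C^2=-I$. Thus $L\phi=\lambda\phi$ implies $LC\phi=\overline\lambda\,C\phi$, so the spectrum is symmetric about $\mathbb R$; and for a \emph{real} eigenvalue $C\phi$ is again an eigenvector which cannot be proportional to $\phi$ (otherwise $C^2\phi=|c|^2\phi\neq-\phi$), so every real eigenvalue has geometric multiplicity at least two (a Kramers-type degeneracy). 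Since by Lemma~\ref{loc} the disc $D_n$ carries total algebraic multiplicity $2$ for $|n|>N$, a real eigenvalue there must be a single semisimple double eigenvalue, which is case (ii), while a non-real eigenvalue forces its conjugate into $D_n$ as a second, necessarily simple, eigenvalue, which is case (i); transferring back through $U$ gives (b) for $L_{Per^\pm}(v)$.

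For (c) I would compute directly in (\ref{beta}). On Fourier coefficients the relation $Q=t\overline P$ reads $q(m)=t\,\overline{p(-m)}$. Conjugating a generic summand of $\sigma^+_\nu$ at a real $z$ leaves the real denominators $(n-j_k+z)$ unchanged, turns each factor $q(\cdot)$ into $t$ times a factor $p$ with negated argument and each factor $p(\cdot)$ into $t^{-1}$ times a factor $q$ with negated argument; since $\sigma^+_\nu$ carries exactly one more $q$-factor than $p$-factor, the powers of $t$ collapse to a single $t$, and the negated arguments reproduce precisely the corresponding summand of $\sigma^-_\nu$. Hence $\overline{\sigma^+_\nu}=t\,\sigma^-_\nu$ and, summing, $\overline{\beta^+_n(v;z)}=t\,\beta^-_n(v;z)$ for every real $z$. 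By (a) for $t>0$ or (b) for $t<0$ the eigenvalues $\lambda_n^\pm$ are either real or complex-conjugate, so $z_n^*=(\lambda_n^++\lambda_n^-)/2-n$ is real; evaluating the identity at $z=z_n^*$ yields (\ref{4.25}) and therefore $|\beta^+_n(z_n^*)|=|t|\,|\beta^-_n(z_n^*)|$. This is exactly (\ref{g1}) with $c=\max\{|t|,1/|t|\}$, valid for all large $|n|$ of either parity, so $v\in X^+\cap X^-\subset X^+\cup X^-$.

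Finally (d) is immediate: having placed $v$ simultaneously in $X^+$ and in $X^-$, Theorem~\ref{thm0}(A) produces a Riesz basis of root functions for $L_{Per^+}(v)$ and, independently, for $L_{Per^-}(v)$. The one genuinely delicate point is the dichotomy in (b): conjugation symmetry of the spectrum alone does not exclude two distinct \emph{real} eigenvalues inside $D_n$, and it is precisely the quaternionic relation $C^2=-I$ — rather than any estimate on $\alpha_n$ or $\beta_n^\pm$ — that rules these out and at the same time forces semisimplicity in case (ii). The careful bookkeeping of the $t$-powers and of the argument signs in the conjugation of (\ref{beta}) is the other step that must be carried out with attention.
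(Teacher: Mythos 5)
Your proposal is correct and follows essentially the same route as the paper: the identical constant diagonal gauge produces the symmetric/skew-symmetric representative $\tilde v$ in (a) and (b), part (c) is the same termwise conjugation of the series (\ref{beta}) at the real point $z_n^*$ (which the paper leaves implicit with "in view of (\ref{beta})"), and (d) invokes Theorem~\ref{thm0} exactly as the paper does. The only difference, a cosmetic one, is in (b): you package the paper's conjugation map $(w_1,w_2)\mapsto(\overline{w_2},-\overline{w_1})$ as an anti-linear symmetry $C=\sigma\circ\mathrm{conj}$ with $C^2=-I$ and extract from it both the conjugate-symmetry of the spectrum and the Kramers-type double degeneracy of real eigenvalues, whereas the paper obtains the spectral symmetry from $(L_{Per^\pm}(v))^*=L_{Per^\pm}(v_t)$ being similar to $L_{Per^\pm}(v)$ and the degeneracy from the explicit orthogonality $\left\langle (w_1,w_2)^T,(\overline{w_2},-\overline{w_1})^T\right\rangle =0$; your version is, if anything, slightly more scrupulous in applying the conjugation only to the skew-symmetric representative and then transferring multiplicities back through the gauge, a point the paper glosses over.
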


\begin{proof}
For every $c\neq 0, $ the Dirac operator $L_{Per^\pm}(v) $ is
similar to the Dirac operator $L_{Per^\pm}(v_c) $ with
$v_c = \begin{pmatrix}   0  & cP  \\
\frac{1}{c} Q   &  0
\end{pmatrix}.$
Indeed, if $C= \begin{pmatrix}   c  & 0  \\
  0  &  1
\end{pmatrix},$
then a simple calculation shows that $ CL_{Per^\pm}(v)=
L_{Per^\pm}(v_c) C. $

If $v \in X_t $ we set $\tilde{v} = v_c  $ with $ c=\sqrt{|t|}.$
Then $ \frac{1}{c} Q = \frac{t}{c} \overline{P}  =
\frac{t}{\sqrt{|t|}}\overline{P}=
 \pm c \overline{P}.
$ Therefore,  $\tilde{v} $  is symmetric or skew-symmetric,
respectively,   for $t>0 $  and  $t<0.$

(b)  By (\ref{adj}),  $ (L_{Per^\pm} (v))^* = L_{Per^\pm} (v^*)$
with
 $$ v^* =\begin{pmatrix} 0 & \overline{Q}
\\ \overline{P} & 0 \end{pmatrix} =
\begin{pmatrix} 0 & tP
\\ \frac{1}{t} Q & 0 \end{pmatrix} = v_t,
$$
so the operator $L_{Per^\pm} (v)$ is similar to its adjoint
operator. Therefore, if $\lambda \in Sp  \left (L_{Per^\pm} (v)
\right ), $ then  $\overline{\lambda} \in Sp  \left (L_{Per^\pm} (v)
\right ) $ as well.

On the other hand by Lemma \ref{loc}, there is an $N=N(v) $ such
that for $|n|>N$ the disc $D_n =\{z: \; |z-n|<1/4\}$ contains
exactly two (counted with algebraic multiplicity) periodic (for even
$n$) or antiperiodic (for odd $n$) eigenvalues of the operator
$L_{Per^\pm}.$  Therefore,
 if
$\lambda \in D_n $ with $  Im \, \lambda \neq 0 $ is an eigenvalue
of $L_{Per^\pm} $  then $\overline{\lambda} \in D_n $ is also an
eigenvalue of $L_{Per^\pm}$ and  $\overline{\lambda} \neq \lambda, $
so $\lambda $ and $\overline{\lambda}$ are simple, i.e., (i) holds.

Suppose $\lambda \in D_n $ is a real eigenvalue. If $\begin{pmatrix}
w_1 \\w_2
\end{pmatrix}$  is a corresponding  eigenvector, then
passing to conjugates we obtain $ L \begin{pmatrix} \overline{w_2}
\\ -\overline{w_1}
\end{pmatrix}
= \lambda  L \begin{pmatrix} \overline{w_2} \\ -\overline{w_1}
\end{pmatrix},
$ i.e., $\begin{pmatrix} \overline{w_2} \\ -\overline{w_1}
\end{pmatrix}$ is also an eigenvector corresponding
to the eigenvalue $\lambda.$ But $\left \langle    \begin{pmatrix}
w_1 \\w_2
\end{pmatrix},
\begin{pmatrix} \overline{w_2} \\ -\overline{w_1}
\end{pmatrix}    \right  \rangle =0, $
so these vector-functions are linearly independent. Hence (ii)
holds.

(c) By (i) and (ii) it follows that
$$
z_n^* = \frac{1}{2} (\lambda_n^- + \lambda_n^+) - n \quad \text{is
real for} \quad |n|>N.
$$
In view of (\ref{beta}), this implies that (\ref{4.25}) holds.

(d) In view of (\ref{4.25}), we have $v \in X, $ so the claim
follows from Theorem~\ref{thm0}.
\end{proof}

 \begin{Example}
{\em If $a, b, A, B $ are non-zero complex numbers
 and
 \begin{equation}
 \label{c15}
v=\begin{pmatrix} 0  & P  \\  Q  & 0   \end{pmatrix} \quad
\text{with} \quad
 P(x)  = a e^{ 2ix} + b e^{- 2ix}, \quad Q(x) = A e^{ 2ix} + B
e^{ -2ix},
\end{equation}
 then the system of root functions of $L_{Per^+}(v)$
 (or $L_{Per^-}(v)$)
 contains at most finitely many linearly independent
 associated functions. Moreover,
the system of root functions of $L_{Per^+}(v) $ contains Riesz bases
always, while the system of root functions of $L_{Per^-} (v)$
contains Riesz bases if and only if $|aA|=|bB|.$}
\end{Example}

Let us mention that if $bc = Per^+$ then it is easy to see by
(\ref{beta}) that $\beta_n^\pm (z)= 0 $ whenever defined, so the
claim follows from Theorem~\ref{thm0}.

If $bc = Per^-,$ then the result follows from Theorem~\ref{thm00}
and the asymptotics
 \begin{equation}
 \label{c16}
\beta_n^+ (0) = A^{\frac{n+1}{2}}a^{\frac{n-1}{2}}4^{-n+1} \left
[\left ( \frac{n-1}{2}  \right )! \right ]^{-2} \left (1+
O(1/\sqrt{|n|} \right ),
 \end{equation}
\begin{equation}
 \label{c17}
\beta_n^- (0) = b^{\frac{n+1}{2}}B^{\frac{n-1}{2}}4^{-n+1} \left
[\left ( \frac{n-1}{2} \right )! \right ]^{-2} \left (1+
O(1/\sqrt{|n|} \right ).
 \end{equation}
 Proofs of (\ref{c16}), (\ref{c17}) and similar asymptotics,
 related to other trigonometric polynomial potentials and implying
 Riesz basis existence or non-existence, will be given elsewhere
 (see similar results for the Hill-Schr\"odinger operator in
 \cite{DM25a,DM25}).

\end{document}